\theoremstyle{plain} 
\newtheorem{theorem}{Theorem}
\newtheorem{corollary}{Corollary}
\theoremstyle{definition}
\newtheorem{defn}{Definition}
\theoremstyle{remark} 
\newtheorem{remark}{Remark}
\renewcommand{\phi}{\varphi}
\title{On The Study Of D-Optimal  Saturated  Designs  For Mean,  Main Effects and $F_1$-Two-Factor Interactions For $2^k$-Factorial Experiments}
\author{Francois Domagni \qquad A. S.  Hedayat \qquad Bikas Kumar Sinha  \\
Department of Mathematics, Statistics, and Computer Science \\
University of Illinois at Chicago \\
{\tt fdomag2@uic.edu, hedayat@uic.edu, bikassinha1946@gmail.com  } 
}
\date{\today}
\begin{document}

\maketitle

\begin{abstract}

The goal of this paper is to develop methods for the construction of saturated designs that include the mean, main effects and the two-factor interactions of one factor with a subset of  the remaining factors. If one factor is interacting with all the remaining factors  give a method for the  construction of  a d-optimal saturated design. If one factor is interacting with proper subset of the remaining factor we discuss the saturated d-optimal design  for specific cases. 

\smallskip

\emph{Keywords and phrases:} Saturated Designs; D-optimal Designs; Hadamard Matrices ; Maximal Determinant Problem 
\end{abstract}

\section{Introduction}
\label{S:intro}

A  saturated design (SD) in a two-level factorial  experiment  is a design with the minimum number of runs that ensures the unbiased estimation of  the effects and interactions of interest given the remaining parameters are negligible. The number of runs $n$ retained in a SD is equal to the total number of parameters of interest. Thus  a saturated design  matrix is a square non-singular matrix of order $n$ with entries from $\lbrace -1, 1\rbrace$ that is chosen so as to satisfy the conditions of the parameters of interest. The statistical model  retained in this paper for a SD is the regular linear model $Y = D\beta + \epsilon$, where $Y$ is the response variable and $\epsilon$ is the usual error term. The matrix $D$ is a saturated design matrix  for the  given vector parameter of interest $\beta$ . Once $D$ is chosen, the ordinary least square method (OLS) can  be used to obtain the unbiased estimation of the parameters of interest. That is $\hat{\beta} = (D^TD)^{-1}D^TY = D^{-1}Y$ . As a result of the estimator $\hat{\beta} = D^{-1}Y$ the  determinant of the Fisher information of a SD is maximal if the absolute value of the determinant of $D$ is maximal.  Saturated designs are one of the most important designs in practice. They are desirable to practitioners  mainly when the important effects and interactions  to be estimated are known beforehand . However it turns out that the construction of SD is not a trivial problem.  There has been a vast literature as well as ongoing investigation about the construction of  SD under certain conditions.  Hedayat  and Pesotan in  [\cite{Hedayat1}] and [\cite{Hedayat2}] have discussed how to construct a saturated design that includes the estimation of  the mean, the main effects and a selected number of second order interactions.
Furthermore various computer algorithms have been developed to search for SDs for two level factorial experiments. Some of which are SPAN, DETMAX. As a case in point, see  [\cite{Hedayat3}]. In this paper the  problem we propose to solve is two-fold. 
\begin{enumerate}

\item  In the first part we  propose methods for the construction of saturated  and d-optimal saturated  design matrices for the estimation of the mean , the main effects and  the two-factor interactions of one factor with the remaining factors. Specifically we consider a two-level factorial experiment with $k$  factors  $F_1, \cdots , F_k$  and we develop algorithms for the construction of a saturated design matrix as well as a saturated d-optimal design matrix that includes the estimation of the main effects $F_1, \cdots , F_k$, the $F_1-$two factor interactions $F_{12}, \cdots , F_{1k}$ and the mean that we denote by $F_0$. We define $\mathcal{G}(k,1)$ as the set of all such design matrices. 
 \item In the second part of the paper we propose methods for the construction of saturated design matrices for $k+n$ main effects $F_1, \cdots, F_k, F^e_1, \cdots, F^e_n$, the $F_1$-two factor interactions $F_{12}, \cdots , F_{1k}$  and the mean $F_0$. We define $\mathcal{G}_n(k,1)$ as the set that includes all such design matrices.   Then we study the d-optimal saturated design matrix for the specific cases of $n=1$ and $n = 2k$. 
 \end{enumerate}
 Our approach to the problem is to first show that any  element of $\mathcal{G}(k,1)$  and $\mathcal{G}_n(k,1)$ can be written of a specific  block matrix form.  Next, we prove  the absolute value of the determinant of such a  block matrix is  bounded above by some constant independently of the choice of the block matrix. We then come up with an algorithm for the construction of one such block matrix for which the absolute value of the determinant attains the upper bound.  Our  work is essentially based on the Maximal Determinant Problem of Hadamard that has gained a lot of attention in the last century. It  asks for the largest determinant value  of $\lbrace -1, +1\rbrace$-matrices $M_n$  of order $n$. The problem  has been studied extensively in the literature according to  $4$ different values of $n$ (   $n \equiv 0 \pmod{4}$,  $n \equiv 1 \pmod{4}$,  $n \equiv 2 \pmod{4}$ and  $n \equiv 3 \pmod{4}$  ).  An upper bound has been found for the determinant for each value of $n$. We recall that it is well known that  for $n = 1$ ,  $n=2$  and $n \equiv 0 \pmod{4}$ we have $ |det(M_n)| \leq n^{\frac{n}{2}}$. The question as to whether there always exists a $\lbrace -1, +1\rbrace$-matrix of order $n$ with $n \equiv 0 \pmod{4}$ which attains the upper bound $n^{\frac{n}{2}}$ goes back to  the  famous \textit{Hadamard conjecture} which states that the answer is yes. Even though the conjecture has not been proved for an arbitrary value of $n$, it is widely accepted to be true. Ehlich [\cite{Ehlich1}] and Wojtas [\cite{Wojtas}] independently showed that  for  $n \equiv 2 \pmod{4}$  we have  $ |det(M_n)| \leq (2n-2)(n-2)$  . 
 For $n \equiv 1 \pmod{4}$, we have  $ |det(M_n)| \leq \sqrt{ (n-1)^{n-1}(2n-1)  }$. This results is due to Ehlich [\cite{Ehlich1}] and Barba  [\cite{Barba}]. 
 Finally for $n \equiv 3 \pmod{4} $, Ehlich [\cite{Ehlich2} ] showed that  $ |det(M_n)| \leq  \sqrt{ (n-3)^{(n-s)}(n-3 + 4r)^u(n + 1 + 4r)^v(1-\frac{ur}{n-3 + 4r} - \frac{v(r + 1)}{n + 1 + 4r}  )} $,    
 where $s = 3$ for $n=3$ in which case it is assumed that   $(n-3)^{(n-s)} = 1$;  $s = 5$ for $n=7$; $s=6$ for $11\leq n \leq 59$; $s = 7$ for $n\geq 63 $.  The constants  $r$ , $v$ and $u$ are obtained as follows;  $r =\lfloor \frac{n}{s} \rfloor $,
 $v = n -rs $ and $u = s- v$.

 \section{Construction of d-optimal saturated  designs in $\mathcal{G}(k,1)$ } 
\subsection{Preliminaries}
In this section we  consider a two-level factorial experiment with $k$  factors $F_1, \cdots , F_k$. We investigate  the class of saturated design matrices for a vector parameter $\beta$ that includes the mean, the $k$ main effects and the second order interactions of  factor $F_1$ with the remaining factors $F_2 ,\cdots, F_k$ . 
More precisely, for such a problem there are $k$ main effects $F_1,\cdots , F_k$,  the mean $F_0$ and  $k-1$ second order interactions $F_{12},\cdots, F_{1k}$  . The total number of parameters to estimate is $2k$. A saturated design would therefore require $2k$ runs. To gain more intuition about the problem, we give an example about the particular case of $k =3$ as follows.  For $k=3$ the number of parameters to estimate is 6, namely,  $F_0, F_1, F_2, F_3, F_{12}, F_{13}$. It follows that  a saturated design would require 6 runs. Suppose we choose the candidate design with the  runs $\lbrace 
 (1,1,1) ;  (1,0,0) ; (1,0, 1) ; (0,0,1) ; (0,1,1) ;  (0,1,0) \rbrace $. 
 Then the candidate saturated design matrix would be  a square matrix of order $6$  that is obtained by converting the runs into the underlying design  matrix .  As illustrated below,  the first  matrix underlies the main effects  plus mean $F_1, F_2, F_3$ and   $F_0$ . The second matrix   underlies the second order interactions $F_{12}$ and  $F_{13}$ and  is obtained by taking the Sch\"ur product of $F_1$ with $F_2$ and $F_3$ respectively. The third matrix  is  the candidate saturated design  matrix obtained by combining the first and second matrices.  It is worth pointing out that for convenience we set the factors in the order $F_1, F_2, F_3, F_0$ so that the first  and last entries  of each run correspond to $F_1$ and $F_0$ respectively.
 \\
 \\
\[
\begin{blockarray}{cccc}
  F_1& F_2 &  F_3 &  F_0 \\
\begin{block}{[cccc]}
   + & \text{+}&\text{+} &\text{+} \\
    \text{+} & \text{-}& \text{-} &\text{+} \\
     \text{+} & \text{-}& \text{+} &\text{+}  \\  
 \text{-} & \text{-}&\text{+}   & \text{+} \\
  \text{-} & \text{+}& \text{+} & \text{+} \\
  \text{-} & \text{+}& \text{-} & \text{+} \\
 \end{block}
 \end{blockarray} 
 \Rightarrow     
  \begin{blockarray}{cc}
     F_{12}& F_{13}  \\
     \begin{block}{[cc]}
   + & \text{+}\\
    \text{-} & \text{-}\\
     \text{-} & \text{+} \\  
 \text{+} & \text{+}\\
  \text{-} & \text{-}\\
  \text{-} & \text{+}\\
    \end{block}
    \end{blockarray}
      \Rightarrow
\begin{blockarray}{cccccc}
  F_1& F_2 &  F_3 &  F_0 & F_{12} & F_{13}\\
\begin{block}{[ccc|ccc]}
  + & +&+ &+ & +&+\\
    +& -& - &+& -& -\\
     + & -& +&+& -&+\\ 
     \cline{1- 6}     
 - & -&+  & +& +&- \\
  - &+&+&+&-& -\\
  -& +& -& +& -& +\\
\end{block}
\end{blockarray}
  \]
  \\
 
It is important to observe that for the given candidate design matrix given above,  $F_1$ is of the form  $F_1 = \begin{bmatrix}
1_{3}\\-1_3
\end{bmatrix} $, where  The Sch\"ur product of $F_1$ by itself ($F_{11}$) yields  \\
\\
 $F_{11}  = \begin{bmatrix}
1_3*1_3\\-1_3*(-1_3)
\end{bmatrix} = \begin{bmatrix}
1\\ 1 \\ 1\\
\\
1\\1\\1
\end{bmatrix}  = F_{0}$ , $F_{12} = \begin{bmatrix}
1_{3}\\-1_3
\end{bmatrix} *F_2 =   \begin{bmatrix}
1_{3}\\-1_3
\end{bmatrix} *\begin{bmatrix}

1\\ -1\\ -1
\\
-1\\1\\1
\end{bmatrix} =  \begin{bmatrix}
1\\ -1 \\ -1
\\
\\

1\\-1\\-1
\end{bmatrix}  $ and  $F_{13} = \begin{bmatrix}
1_{3}\\-1_3
\end{bmatrix} *F_3 =   \begin{bmatrix}
1_{3}\\-1_3
\end{bmatrix}\begin{bmatrix}

1\\ -1\\ 1
\\
1\\1\\-1
\end{bmatrix}
=  \begin{bmatrix}
1\\ -1 \\ 1
\\
\\
-1\\-1\\1
\end{bmatrix}  $. \\
 Furthermore the Sch\"ur product of $F_1$ with $F_2 $ and $F_3$ leave the first 3 entries of $F_2$ and $F_3$ unchanged and negate the last 3 entries.
It turns out from the above observations that the candidate saturated design can be written as:

\[
\begin{blockarray}{cccccc}
  F_1& F_2 &  F_3 &  F_{11} & F_{12} & F_{13}\\
\begin{block}{[ccc|ccc]}
  + & +&+ &+ & +&+\\
    +& -& - &+& -& -\\
     + & -& +&+& -&+\\ 
     \cline{1- 6}     
 - & -&+  & +& +&- \\
  - &+&+&+&-& -\\
  -& +& -& +& -& +\\
\end{block}
\end{blockarray}
  = \begin{bmatrix}
    M & M\\
   - N & N
    \end{bmatrix} 
    \],

  where $M =  \begin{bmatrix}
   \text{+} & \text{+}&\text{+} \\
    \text{+} & \text{-}& \text{-} \\
     \text{+} & \text{-}& \text{+} \end{bmatrix}  $  and $N =\begin{bmatrix}
   \text{+} & \text{+}&\text{-} \\
    \text{+} & \text{-}& \text{-} \\
     \text{+} & \text{-}& \text{+} \end{bmatrix}  $.
  \\
  \\
  \begin{remark}
   A few remarks can be made as follows;
  \begin{enumerate}
  \item The mean $F_0$ can be written as the Sch\"ur product of $F_1$ by itself. This simple fact will be crucial in the theorems we  develop in the upcoming section. 
  \item For any choice of candidate saturated design the corresponding candidate saturated design matrix is necessarily of the form $\begin{bmatrix}
    M & M\\
    N & -N
    \end{bmatrix}$ as shown above.  In the example  $F_1$ has as many $+1$ entries as $-1$ entries which means $F_1$ is balanced.  Therefore  $M$ and $N$ are square matrices of order $k$. 
   \item   The candidate  design matrix as displayed above  will be a valid design matrix  if it is a non-singular matrix. We shall see in the remainder of this paper that in general  a candidate design matrix is  a valid design matrix  if and only if the design is chosen so that $F_1$ is balanced and that $M$ and $N$ are non-singular matrices. 
  \end{enumerate}
  \end{remark}

 \subsection{ Construction of saturated and saturated d-optimal design matrices in $\mathcal{G}(k,1)$}
  In the remaining of this section we explore the the construction of a d-optimal design matrices for mean, main effects and the $F_1$-second-order- interactions from a general perspective.  We assume without loss of generality that  the vector parameter of interest is of the form    $\beta = [ F_1,\cdots, F_k, F_0,  F_{12}, \cdots, F_{1k}]^T $ . For convenience we make the following definitions.
 \begin{defn} 
 We make the following definitions;
 \begin{enumerate}

\item  We define $\mathcal{G}(k, 1)$ to be the set of all the saturated design matrices that ensure the unbiased estimation of the vector parameter of interest  $\beta$ .  We purposely use the notation $\mathcal{G}(k, 1)$ to indicate that the vector parameter of interest $\beta$ includes  the $k$ main effects , the mean ,   and all the  $F_1$-second-order-interactions. 
\item We  define  $g(k, 1)$ to be an element of $\mathcal{G}(k, 1)$. It is worth pointing out that $g(k, 1)$ is a non-singular matrix of order $2k$ with entries from $\lbrace -1, 1\rbrace$ that satisfies the condition of the parameter $\beta$.
\item We define $\mathcal{M}_{k}\lbrace -1,1\rbrace $  as the set of non-singular matrices of order $k$ with entries from $\lbrace -1, 1\rbrace$ for which the first  column is the vector $1_k$.
 \item We define $\Theta_k$ to be the maximal value of the absolute value of the  determinant of matrices in $\mathcal{M}_k\lbrace -1, +1 \rbrace$.
   \end{enumerate}
   
  \end{defn}
  The factor $F_1$ plays a key role in the construction of a saturated design for the vector parameter $\beta $ as specified above because it is the only factor that interacts with all the remaining factors. Therefore we define the factor $F_1$ as the pivot factor. Since the entries of $F_1$ takes values from $\lbrace -1, 1\rbrace $ we assume without loss of generality that $F_1$ is of the form $F_1 = \begin{bmatrix}
  1_{f_+}^T & -1_{f_-}^T
  \end{bmatrix}^T$,  where $ f_+ $ and $f_-$  are respectively the frequencies of $1$ and $-1$ entries  in the vector $F_1$ with $ f_+ + f_- = 2k$. 
   For convenience   we write $F_2, \cdots,  F_k$ as block vectors 
   $ F_2 =\begin{bmatrix}
m_2^T & n_2^T
\end{bmatrix}^T$
$, \cdots , $
 $F_k = \begin{bmatrix}
m_k^T & n_k^T
\end{bmatrix} ^T$  ,
where $m_2,\cdots , m_k$ are vectors of length $f_+$ and $n_2,\cdots , n_k$ are vectors of length $f_-$  with entries from $\lbrace -1, 1\rbrace $.  We enumerate the following key observations.
\\
  \begin{enumerate}
\item The $F_1$-second-order-interactions $F_{12}, \cdots, F_{1k}$ are  obtained by the Sch\"ur product of $F_1$  with $F_{2}, \cdots, F_{k}$ as follows:
\\
  $ F_{12} =\begin{bmatrix}
(1_{f_+}*m_2)^T &
(-1_{f_-}*n_2)^T
\end{bmatrix}^T  = \begin{bmatrix}
m_2^T &
-n_2^T
\end{bmatrix}^T$\\
$\vdots $\\
 $F_{1k} = \begin{bmatrix}
(1_{f_+}*m_k)^T &
(-1_{f_-}*n_k)^T
\end{bmatrix}^T =  \begin{bmatrix}
m_k^T &
-n_k^T
\end{bmatrix}^T$.  

\item The mean $F_0$ which is a  $1_{2k}$ column vector can be written as $F_0 = \begin{bmatrix}
1_{f_+}^T &
1_{f_-}^T
\end{bmatrix}^T = \begin{bmatrix}
(1_{f_+}*1_{f_+})^T &
(-1_{f_-}*(-1_{f_-})^T
\end{bmatrix}^T  =F_1*F_1$ .  That is the mean $F_0$ can be obtained by the Sc\"ur product of $F_1$ with itself. 
\end{enumerate}
 By preserving the order  in which  the parameters in the vector $\beta = [ F_1,\cdots, F_k, F_0,  F_{12}, \cdots F_{1k}]^T $ appear,   each element of $\mathcal{G}(k, 1)$  can be written as :
\\
$\begin{bmatrix}
1_{f_+} & m_2 &\cdots & m_k & & 1_{f_+} & m_2 &\cdots & m_k \\
-1_{f_-} & n_2 &\cdots & n_k & & 1_{f_-} & -n_2 &\cdots & -n_k
\end{bmatrix} =   \begin{bmatrix}
  M   &      M\\
  
 -N    &      N
\end{bmatrix}  $ , \\
\\
  where $ M =\begin{bmatrix}
1_{f_+} & m_2 &\cdots & m_k 
\end{bmatrix} $  and $ N =\begin{bmatrix}
1_{f_-} & n_2 &\cdots & n_k 
\end{bmatrix} $ with dimensions $f_+ \times k $ and $f_- \times k $ respectively.
Thus  each element of $\mathcal{G}(k, 1)$ is necessarily on the block matrix form  $\begin{bmatrix}
  M   &      M\\
  
 -N    &      N
\end{bmatrix}  $.  Now just because we have the block matrix form $\begin{bmatrix}
  M   &      M\\
  
 -N    &      N
\end{bmatrix}  $ doesn't mean that we have obtained an element of $\mathcal{G}(k, 1)$. The question one may ask is \text{''} What are the necessary and sufficient conditions on  the matrix $\begin{bmatrix}
  M   &      M\\
  
 -N    &      N
\end{bmatrix} $ to be an element of $\mathcal{G}(k, 1)$ ? \text{''}.
  
 Our goal in what follows  is to provide  necessary and sufficient conditions to construct an element of $\mathcal{G}(k, 1)$.
In the theorem below we provide  the necessary and sufficient conditions to construct an element of $\mathcal{G}(k, 1)$.

\begin{theorem}
\label{thrm}
Let $D$ be a design matrix then $D$ is an element of $\mathcal{G}(k, 1)$ if and only if it can be written as $D =   \begin{bmatrix}
  M_1   &      M_1\\
  
 -N_1    &      N_1
\end{bmatrix} $  where $M_1$ and $N_1$ are elements of $\mathcal{M}_{k}\lbrace -1,1\rbrace $.

\end{theorem}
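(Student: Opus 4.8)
\emph{Proof proposal.} The plan is to leverage the block decomposition derived just before the statement and reduce the whole question to a single rank computation. Recall that the discussion preceding the theorem already forces every element of $\mathcal{G}(k,1)$ into the shape $\begin{bmatrix} M & M \\ -N & N\end{bmatrix}$, where $M = \begin{bmatrix} 1_{f_+} & m_2 & \cdots & m_k\end{bmatrix}$ is $f_+\times k$ and $N = \begin{bmatrix} 1_{f_-} & n_2 & \cdots & n_k\end{bmatrix}$ is $f_-\times k$, with $f_+ + f_- = 2k$. So the structural half of the ``only if'' direction is free, and what remains is to show that membership in $\mathcal{G}(k,1)$ --- equivalently, since unbiased OLS estimation requires $D$ invertible, non-singularity of $D$ --- is exactly the condition $f_+ = f_- = k$ together with $M,N$ non-singular. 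Because the first columns of $M$ and $N$ are $1_{f_+}$ and $1_{f_-}$, once $f_+ = f_- = k$ is known these blocks automatically lie in $\mathcal{M}_k\{-1,1\}$, and we may take $M_1 = M$, $N_1 = N$.

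First I would settle non-singularity by solving $D\begin{bmatrix} a \\ b\end{bmatrix} = 0$ with $a,b\in\RR^k$. Block multiplication yields the two equations $M(a+b)=0$ and $N(b-a)=0$. Setting $u=a+b$ and $w=b-a$ defines a linear isomorphism $(a,b)\mapsto(u,w)$ of $\RR^k\times\RR^k$, so the null space of $D$ is trivial if and only if $\ker M = \{0\}$ and $\ker N = \{0\}$, i.e. both $M$ and $N$ have full column rank $k$. Since $M$ has only $f_+$ rows and $N$ only $f_-$ rows, full column rank of both forces $f_+\ge k$ and $f_-\ge k$; combined with $f_+ + f_- = 2k$ this pins down $f_+ = f_- = k$, after which $M$ and $N$ are square and non-singular. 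I expect this decoupling step to be the main obstacle: it is precisely what converts a statement about the $2k\times 2k$ matrix $D$ into independent full-rank conditions on the two blocks and, at the same time, forces the pivot factor $F_1$ to be balanced.

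For the converse I would start from $M_1,N_1\in\mathcal{M}_k\{-1,1\}$ and verify the requirements in turn. The entries of $D$ lie in $\{-1,1\}$ and $D$ is $2k\times 2k$ by construction, and non-singularity follows from a determinant identity: subtracting the first block of $k$ columns from the last block turns $D$ into the block-triangular matrix $\begin{bmatrix} M_1 & 0 \\ -N_1 & 2N_1\end{bmatrix}$ without changing the determinant, so $|\det D| = 2^k|\det M_1|\,|\det N_1|\ne 0$. The substantive point is that $D$ is a genuine design matrix for $\beta$: writing $M_1 = \begin{bmatrix}1_k & m_2 & \cdots & m_k\end{bmatrix}$ and $N_1 = \begin{bmatrix}1_k & n_2 & \cdots & n_k\end{bmatrix}$, the requirement that the first columns equal $1_k$ makes the $(k+1)$-st column of $D$ equal to $1_{2k}=F_0$, and a one-line Sch\"ur-product check gives $F_1*F_1=F_0$ and $F_1*F_j=F_{1j}$ for each $j$, exactly as needed. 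Hence the first $k$ columns define $2k$ runs whose induced interaction columns reproduce the remaining columns of $D$, and since $D$ is non-singular it is a saturated design matrix for $\beta$, i.e. $D\in\mathcal{G}(k,1)$. Beyond bookkeeping I anticipate no difficulty here, the only conceptual ingredient being that the ``first column $=1_k$'' clause in the definition of $\mathcal{M}_k\{-1,1\}$ is exactly what guarantees the mean and the interaction structure.
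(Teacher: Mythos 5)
Your proof is correct, and while it follows the same overall architecture as the paper's (reduce to the pre-established block form, then characterize non-singularity), the key technical steps are genuinely different. The paper splits the non-singularity analysis into two separate arguments: for necessity it assumes WLOG $f_+ > k$, notes that the rows of $M$ must then be linearly dependent, and transfers that dependence to the corresponding rows of $D$ to conclude singularity (with a similar row-dependence argument when $M$ or $N$ is square but singular); for sufficiency it invokes the Schur-complement identity $\det D = \det(N)\det(M + MN^{-1}N) = 2^k\det(M)\det(N)$. You instead solve $D\begin{bmatrix}a\\ b\end{bmatrix}=0$ and observe that the change of variables $(a,b)\mapsto(u,w)=(a+b,\,b-a)$ decouples the system into $Mu=0$, $Nw=0$, so $D$ is non-singular if and only if both blocks have full column rank $k$; this single computation yields both directions of the non-singularity equivalence at once and makes the balance condition $f_+=f_-=k$ transparent (full column rank needs at least $k$ rows on each side, and the row counts sum to $2k$). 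Your determinant identity via column operations, giving the block-triangular form $\begin{bmatrix} M_1 & 0\\ -N_1 & 2N_1\end{bmatrix}$ and hence $|\det D| = 2^k|\det M_1|\,|\det N_1|$, is also more elementary than the paper's Schur complement (no inverse required) and still delivers exactly the formula needed for the d-optimality corollary that follows. Finally, you explicitly verify in the converse that the $(k+1)$-st column is $F_0$ and that the remaining columns are the Sch\"ur products $F_1*F_j$, i.e.\ that the candidate matrix really has the design structure demanded by $\beta$ and not merely invertibility; the paper leaves this point implicit, so your write-up is, if anything, slightly more complete on the ``if'' direction.
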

 
 \begin{proof}
We have seen that any element of $\mathcal{G}(k, 1)$ is necessarily on the form 
$
  F = \begin{bmatrix}
  M   &      M\\
  
 -N    &      N
\end{bmatrix}
$,
 where   $M $ and  $N $ are $\lbrace -1, 1\rbrace$-matrices of dimensions $f_+\times k$ and $f_-\times k$ respectively.  We will first show that  if $f_+ \neq f_-$ then the matrix $F$  is a singular matrix. In that  case F is not an element of $\mathcal{G}(k, 1)$ . We then show that $M$ and $N$ have to be both non-singular matrices of order $k$ for $F$ to be an element of $\mathcal{G}(k, 1)$ .

 \begin{enumerate}
 
 \item Assume without loss of generality that $f_+ > k$. 
 Then since $M$ is of dimensions
$f_+\times k$  we have $rank(M)$ is at most $k$. Therefore the rows of $M$ that we define as  $m_1^T, \cdots, m_{f_+}^T$ are linearly dependent. We may assume without  loss of generality that $m_1^T$ is linearly dependent on $m_2^T, \cdots, m_{f_{+}}^T$ ,  so that $m_1 = \sum_{i=2}^{f_+} c_im_i$  with some   $c_i\neq 0$, $\quad 2\leq i \leq f_{+}$. This implies that
$\begin{bmatrix}
m_1\\
m_1
\end{bmatrix} =  \sum_{i=2}^{f_+} c_i \begin{bmatrix}
m_i\\
m_i
\end{bmatrix} $.  It  means that the  rows $\begin{bmatrix}
m_1^T & m_1^T
\end{bmatrix} , \cdots , \begin{bmatrix}
m_{f_+}^T & m_{f_+}^T
\end{bmatrix}$ of $F$ are linearly dependent,  which would make $F$ a singular matrix.  In a similar manner one can show that   if  $f_{-}> k$ then $F$ is a singular matrix .  Thus it turns out  that $f_{-} = f_{+} = k$ is a necessary condition on  $F$ to be non-singular . 
It follows that  any element $F$ of $\mathcal{G}(k, 1)$ is on the form $
  F = \begin{bmatrix}
  M   &      M\\
  
 -N    &      N
\end{bmatrix}
$, 
 where   $M $ and  $N $ are $\lbrace -1, 1\rbrace$-matrices of order $k$.  \\
 
Now  If the matrix $M$ is singular  the rows of $F$ would be linearly dependent and F would be a singular matrix by analogy of the  argument above. By the  same argument  if $N$ is singular, F would be a singular matrix. 
 
 \item Now suppose both $M$ and $N$ are non-singular matrices, that is  $M$ and $N$ are elements of $\mathcal{M}_{k}\lbrace -1,1\rbrace $. Then $det(F) = det \begin{bmatrix}
  M   &      M\\
  
 -N    &      N
\end{bmatrix} = det (N)det(M+ MN^{-1}N ) = 2^kdet(N)det(M)\neq 0 $
It follows that $F$ is an element of $\mathcal{G}(k, 1)$ if and only if $ F = \begin{bmatrix}
  M   &      M\\
  
 -N    &      N
\end{bmatrix}
$,  where $M$ and $N$  elements of $\mathcal{M}_{k}\lbrace -1,1\rbrace $.
 \end{enumerate}
\end{proof}
 \begin{corollary}
 \label{corol}
 A design matrix $D^*$ is a d-optimal saturated design in $\mathcal{G}(k, 1)$ if and only if it can be written as $D^* =   \begin{bmatrix}
  M_1^*   &      M_1^*\\
  
 -N_1^*    &      N_1^*
\end{bmatrix} $  where $M_1^*$ and $N_1^*$ are elements of $\mathcal{M}_{k}\lbrace -1,1\rbrace $ with maximal determinant. Furthermore $|det(D^*)| = 2^k\Theta_k^2$

 \end{corollary}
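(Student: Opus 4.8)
The plan is to deduce the Corollary directly from Theorem~\ref{thrm} together with the determinant identity established in its proof, so that the whole argument amounts to separating a single optimization into two independent pieces. First I would recall that d-optimality, as set up in the introduction, means maximizing $|\det(D)|$ over all admissible design matrices, since the determinant of the Fisher information $D^TD$ equals $(\det D)^2$ and is therefore maximal exactly when $|\det D|$ is. By Theorem~\ref{thrm}, every $D \in \mathcal{G}(k,1)$ has the form $\begin{bmatrix} M & M \\ -N & N \end{bmatrix}$ with $M, N \in \mathcal{M}_k\{-1,1\}$, and conversely every such pair yields an element of $\mathcal{G}(k,1)$. Hence the map $(M,N) \mapsto \begin{bmatrix} M & M \\ -N & N\end{bmatrix}$ is a surjection from $\mathcal{M}_k\{-1,1\} \times \mathcal{M}_k\{-1,1\}$ onto $\mathcal{G}(k,1)$, and the optimization over $\mathcal{G}(k,1)$ may be carried out over the pair $(M,N)$ instead.

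Next I would invoke the determinant computation performed inside the proof of Theorem~\ref{thrm}, namely $|\det(D)| = 2^k\,|\det(M)|\,|\det(N)|$. Since the factor $2^k$ is constant and $M$ and $N$ range over $\mathcal{M}_k\{-1,1\}$ completely independently of one another, maximizing $|\det(D)|$ reduces to maximizing $|\det(M)|$ and $|\det(N)|$ separately. By the definition of $\Theta_k$ each factor is bounded above by $\Theta_k$, so $|\det(D)| \le 2^k\Theta_k^2$ for every $D \in \mathcal{G}(k,1)$. Because $\mathcal{M}_k\{-1,1\}$ is a finite set (there are only finitely many $\{-1,1\}$-matrices of order $k$), the value $\Theta_k$ is actually attained; choosing $M = M_1^*$ and $N = N_1^*$ each attaining $\Theta_k$ produces a $D^*$ with $|\det(D^*)| = 2^k\Theta_k^2$, showing the bound is sharp and yielding the claimed closed form.

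Finally, for the characterization of equality I would argue the converse: if $|\det(D^*)| = 2^k\Theta_k^2$, then from $|\det(D^*)| = 2^k\,|\det(M_1^*)|\,|\det(N_1^*)|$ together with the two inequalities $|\det(M_1^*)| \le \Theta_k$ and $|\det(N_1^*)| \le \Theta_k$, equality in the product forces $|\det(M_1^*)| = |\det(N_1^*)| = \Theta_k$; that is, both blocks must individually have maximal determinant, which pins down exactly the asserted form. I do not anticipate a genuine obstacle here, as the determinant factorization does all of the work. The only point that needs care is the \emph{independence} of the two blocks — that $M$ and $N$ may be chosen freely and simultaneously at the maximizer with no coupling constraint — which is precisely what Theorem~\ref{thrm} guarantees, and which is what lets the joint maximum factor as the product of the two separate maxima.
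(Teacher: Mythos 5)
Your proposal is correct and follows essentially the same route as the paper's own proof: invoke the characterization and determinant factorization $|\det(D)| = 2^k|\det(M)||\det(N)|$ from Theorem~\ref{thrm}, then maximize the two independent factors separately over $\mathcal{M}_k\lbrace -1,1\rbrace$. The only difference is that you make explicit the points the paper leaves implicit (surjectivity of the parametrization, attainment of $\Theta_k$ by finiteness, and the equality-forces-maximality converse), which is a welcome tightening but not a different argument.
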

 
 \begin{proof}
 By Theorem (\ref{thrm}) for any element $D$ of $\mathcal{G}(k, 1)$, $ det(D) = 2^kdet(N)det(M)$ for some $N$ and $M$ elements $\mathcal{M}_{k}\lbrace -1,1\rbrace $. This determinant is maximal  when both $M$ and $N$ have maximal determinant in $\mathcal{M}_{k}\lbrace -1,1\rbrace $.
 \end{proof}
 

 
 \subsection{Algorithm for the construction of an element of $\mathcal{G}(k,1)$}
 \label{goptimal1}
 We use Theorem (\ref{thrm}) and Corollary (\ref{corol}) to develop  an algorithm for the construction of a saturated and a d-optimal saturated design matrix of $\mathcal{G}(k,1)$.
 
 \begin{itemize}
\item \textbf{Step 1} : 
 Select two matrices $M$ and $N$ from $\mathcal{M}_{k}\lbrace -1,1\rbrace $ (For  a d-optimal design select the  matrices $M$ and $N$ with maximal absolute value of determinant)
 \if false\item \textbf{Step 2}: Fold over the design matrix $M$ or (select another $N$ from $\mathcal{M}_{k}\lbrace -1,1\rbrace $) to obtain  $K = \begin{bmatrix} 
  M   \\
  
 -M  
\end{bmatrix}$ or ($K = \begin{bmatrix} 
  M   \\
  
 -N 
\end{bmatrix}$     ) of dimensions $2k\times k$ 
\fi
\item \textbf{Step2}: The design matrices $D_1 = \begin{bmatrix} 
  M  & M\\
  
 -M  & M
\end{bmatrix} $ and $D_2 = \begin{bmatrix} 
  M  & M\\
  
 -N  & N
\end{bmatrix} $ obtain through the above steps are  saturated design matrices for the estimation of the mean $F_0$, the  $k$  main effects $F_1,\cdots, F_k$ and the interactions $ F_{12}, \cdots, F_{1k}$. 
 $D_1$ is a d-optimal design matrix in $\mathcal{G}(k, 1)$ if $ |det(M)|$ is maximal in $\mathcal{M}_{k}\lbrace -1,1\rbrace $.  $D_2 $  is a d-optimal design matrix in $\mathcal{G}(k, 1)$  if  both $|det(M)|$ and $|det(N)|$ have  maximal  determinant in $\mathcal{M}_{k}\lbrace -1,1\rbrace $ .  

 \end{itemize}

\section{Construction of d-optimal saturated  designs in $\mathcal{G}_n(k,1)$  }
 In the previous section we developed theorems and algorithms for the construction of saturated design matrices that are elements of $\mathcal{G}(k,1)$.  In this section we consider a \\2-level factorial experiment with $k+n$ factors that we denote $F_1,\cdots, F_k$ for the $k$ factors and $F_1^e,\cdots, F_n^e$ for the remaining $n$ factors. Our goal here  is to provide algorithms for the construction of a saturated and a saturated d-optimal  design matrices that include the parameters $F_0  , F_1,\cdots, F_k , F_{12},\cdots, F_{1k}$ and the extra main effects  
$F_1^e,\cdots, F_n^e$.  We define $\mathcal{G}_n(k,1)$ as the set of such  saturated design matrices  and $g_n(k,1)$ to be an element of $\mathcal{G}_n(k,1)$. As an example suppose in a $2^5$-factorial experiment the investigator is interested in finding a saturated design matrix  for the estimation of the mean $F_0$, all the main effects $F_1, F_2, F_3, F_4, F_5$ and the two factor interactions $F_{12} , F_{13}$. For this particular problem  one  could rearrange the parameters of interest  as $F_0, F_1, F_2, F_3, F_{12}, F_{13}$ and $F_4, F_5$ so that it becomes  a problem of finding an element of $\mathcal{G}_n(k,1)$ with $k=3$ and $n=2$ such that $F_1^e = F_4$ and $F_2^e = F_5$. Thus the parameters of  interest may be written as $F_1, F_2, F_3, F_0, F_{12}, F_{13} $ and $F_1^e ,F_2^e$. It is worth pointing out that if there was no extra  main effects $F_1^e , F_2^e$ then the parameters of interest would be $F_1, F_2, F_3, F_0, F_{12}, F_{13} $ . The problem would just boil down to finding an element of $\mathcal{G}(3,1)$ which we have discussed extensively in the previous section. But for the problem at hand two extra parameters need to be included in the design .  
Our approach to construct an element of $\mathcal{G}_n(k,1)$ would be to first construct an element of $\mathcal{G}(k,1)$  and an element of $\mathcal{M}_{n}\lbrace -1, 1\rbrace $ . Then we try to combine the two matrices constructed in a way  to form an element of $\mathcal{G}_n(k,1)$.  From now onward  we assume that  the vector parameter of interest  is of the form $\beta =\begin{bmatrix}
F_1 & \cdots & F_k & F_{11} \cdots F_{1k} & F_1^e \cdots F_n^e
\end{bmatrix}$ with the parameters appearing in that order. Therefore with the order preservation of the parameters in $\beta$, it is straightforward to observe  that any element of  $\mathcal{G}_n(k,1)$ can be written on the block matrix  form  $\begin{bmatrix}
g(k,1) & -K_1\\
K_2 & M_n
\end{bmatrix}$ , where $g(k,1)$ is an element  of $\mathcal{G}(k,1)$, $M_n$ is an element of $\mathcal{M}_{n}\lbrace -1, 1\rbrace $, $K_1$ is an $2k\times n$ matrix with entries from $\lbrace -1, 1\rbrace$,  and $K_2$ is an $n\times 2k$ matrix and its rows are of the form $\begin{bmatrix}
r^T & r^T
\end{bmatrix}$ or $\begin{bmatrix}
-r^T & r^T
\end{bmatrix}$,  where $r$ is  a vector of length $k$ with its first  entry being $+1$  and its remaining entries are from $\lbrace -1, 1\rbrace$.
  For convenience  we make the following definition . 
\begin{defn}
\item  Let $A$ be a matrix,  we  define $\mathcal{R}[A]$ to be the set of the rows of $A$ . 

 \end{defn}
In the theorem below we give a method for the construction of an element of  $\mathcal{G}_n(k,1)$  for arbitrary $k$ and $n$. 
\begin{theorem}
\label{theorem2}
Let  $g(k,1)$  and $M_n$ be given elements of $\mathcal{G}(k,1)$ and   $ \mathcal{M}_{n}\lbrace -1, 1\rbrace $ respectively. Furthermore let $G$ and $V$ be two $n\times 2k$  and $2k\times n$ matrices respectively,  such that $\mathcal{R}[G]\subseteq \mathcal{R}[g(k,1)]$ and $\mathcal{R}[V]\subseteq \mathcal{R}[M_n]$. Then the matrix  $\begin{bmatrix}
g(k,1) & -V\\
G & M_n
\end{bmatrix}$ is a saturated  design matrix in  $\mathcal{G}_n(k,1)$ .
\end{theorem}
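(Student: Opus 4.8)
The plan is to check the two requirements for membership in $\mathcal{G}_n(k,1)$: that the displayed matrix has the admissible block shape recalled just before the theorem, and that it is non-singular. Set $B=\begin{bmatrix} g(k,1) & -V\\ G & M_n\end{bmatrix}$. The shape is essentially built into the hypotheses. Indeed $B$ is a $\lbrace-1,1\rbrace$-matrix whose order $2k+n$ matches the number of parameters of a $\mathcal{G}_n(k,1)$ design, since $\mathcal{R}[V]\subseteq\mathcal{R}[M_n]$ and $\mathcal{R}[G]\subseteq\mathcal{R}[g(k,1)]$ force $V$ and $G$ to be $\lbrace-1,1\rbrace$-matrices themselves. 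Matching $B$ against the general form $\begin{bmatrix} g(k,1) & -K_1\\ K_2 & M_n\end{bmatrix}$, the only structural point to verify is that $G$ is an admissible $K_2$, i.e. that each of its rows has the form $[\,r^T\ r^T\,]$ or $[\,-r^T\ r^T\,]$ with $r$ having leading coordinate $+1$. This is automatic: writing $g(k,1)=\begin{bmatrix} M_1 & M_1\\ -N_1 & N_1\end{bmatrix}$ with $M_1,N_1\in\mathcal{M}_k\lbrace-1,1\rbrace$, a top row of $g(k,1)$ is $[\,m^T\ m^T\,]$ and a bottom row is $[\,-n^T\ n^T\,]$, and the leading coordinate of $m$ and of $n$ is $+1$ because the first column of $M_1,N_1$ is $1_k$. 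Since $\mathcal{R}[G]\subseteq\mathcal{R}[g(k,1)]$, every row of $G$ inherits this shape, so $B$ does have the block form of an element of $\mathcal{G}_n(k,1)$, and the whole content of the theorem is the non-singularity of $B$.

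To establish non-singularity I would use the two containment hypotheses to triangularize. Because $g(k,1)$ is non-singular its $2k$ rows are distinct, so the $j$-th row of $G$ coincides with a unique row of $g(k,1)$, say the $\sigma(j)$-th. For each $j$ subtract the $\sigma(j)$-th top block-row of $B$ from the $j$-th bottom block-row; this leaves $\det B$ unchanged and annihilates the lower-left block $G$. In the lower-right block the two minus signs cancel, so the $\sigma(j)$-th row of $V$ is added to the $j$-th row $\mu_j$ of $M_n$; since $\mathcal{R}[V]\subseteq\mathcal{R}[M_n]$ that added row is again some row $\mu_{\pi(j)}$ of $M_n$, where $\pi$ is the induced map on $\lbrace 1,\dots,n\rbrace$. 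The reduced matrix is block upper-triangular, giving $\det B=\det g(k,1)\cdot\det\widetilde M$, where $\widetilde M$ is the $n\times n$ matrix with $j$-th row $\mu_j+\mu_{\pi(j)}$. Writing $P$ for the adjacency matrix of the functional digraph of $\pi$ (a single $1$ in position $(j,\pi(j))$ of row $j$), one has $\widetilde M=(I+P)M_n$, hence $\det B=\det g(k,1)\cdot\det(I+P)\cdot\det M_n$. By Theorem \ref{thrm}, $\det g(k,1)=2^k\det(M_1)\det(N_1)\neq0$, and $\det M_n\neq0$ by assumption, so non-singularity of $B$ is equivalent to the single scalar condition $\det(I+P)\neq0$.

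Establishing $\det(I+P)\neq0$ is the heart of the matter and the step I expect to be the main obstacle. In the canonical situation used by the construction algorithm the map $\pi$ is the identity, so $P=I$, $\widetilde M=2M_n$, and $\det B=2^{k+n}\det(M_1)\det(N_1)\det(M_n)\neq0$; this already settles the designs produced by the algorithm of the previous section. For the statement as worded one must instead control $\det(I+P)$ for the general correspondence $\pi$ that an arbitrary admissible pair $(G,V)$ induces. The approach I would take is spectral: decompose the functional digraph of $\pi$ into its transient (rooted-tree) part and its cycles, observe that the transient part contributes only to the nilpotent block and hence leaves $\det(I+P)$ untouched, and evaluate the factor coming from each cycle. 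The normalization that every row of $M_n$ begins with $+1$ — so that $P\,1_n=1_n$ and the first column of $\widetilde M$ is $2\cdot1_n$ — is the extra structural input I would try to combine with the non-singularity of $M_n$ to force the determinant away from zero. Pinning down this cycle-by-cycle computation, and in particular ruling out cancellations among the rows $\mu_j+\mu_{\pi(j)}$, is exactly the delicate combinatorial point on which the theorem turns, and it is where I would concentrate the effort.
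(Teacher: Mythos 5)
Your structural verification of the block form matches the paper's, and your determinant reduction $\det B=\det g(k,1)\cdot\det(I+P)\cdot\det M_n$ is exactly the row-operation form of the paper's Schur-complement computation $\det B=\det(M_n)\det\lbrace g(k,1)+VM_n^{-1}G\rbrace$, in which $VM_n^{-1}$ is a $0$--$1$ row-selection matrix. But the step you flag as the remaining obstacle --- $\det(I+P)\neq 0$ for an arbitrary induced map $\pi$ --- is a genuine gap, and it cannot be closed, because it is false: for a functional digraph one gets $\det(I+P)=2^{c}$ if every cycle of $\pi$ is odd ($c$ the number of cycles) and $\det(I+P)=0$ as soon as $\pi$ has an even cycle, since an $\ell$-cycle contributes $\prod_{\omega^\ell=1}(1+\omega)=1-(-1)^\ell$. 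The hypotheses $\mathcal{R}[G]\subseteq\mathcal{R}[g(k,1)]$ and $\mathcal{R}[V]\subseteq\mathcal{R}[M_n]$ do not exclude even cycles, so the theorem as stated fails for $n\geq 2$. Concretely, take $k=n=2$, $M=N=M_2=\begin{bmatrix}1&1\\1&-1\end{bmatrix}$, let $G$ consist of the first two rows of $g(2,1)$, and let $V$ have rows $\mu_2,\mu_1,\mu_1,\mu_1$ (rows of $M_2$ in swapped order on top). Then subtracting top row $1$ from bottom row $1$ and top row $2$ from bottom row $2$ produces two rows both equal to $(0,0,0,0,2,0)$, so $\det B=0$: here $\pi$ is the transposition $(1\,2)$ and $I+P=\begin{bmatrix}1&1\\1&1\end{bmatrix}$. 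The normalization you hoped would rescue the argument ($P1_n=1_n$, leading $+1$ entries) only certifies that $+1$ is an eigenvalue of $P$, contributing a harmless factor $2$; it does nothing to exclude the eigenvalue $-1$ produced by an even cycle, and indeed the counterexample above is fully normalized.

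It is worth saying plainly how this compares with the paper: the published proof commits exactly the error you declined to make. After computing that $g(k,1)+VM_n^{-1}G$ is $g(k,1)$ with rows of itself added according to an arbitrary assignment, the paper asserts its determinant is ``proportional to $\det\lbrace g(k,1)\rbrace$ up to a non-zero constant.'' Simultaneous row additions governed by an arbitrary map are not determinant-preserving elementary operations; the correct constant is precisely your $\det(I+P)$, which can vanish. So your proposal is incomplete --- it does not prove the theorem --- but your reduction is sound and correctly isolates the one point at which the paper's own proof is wrong, and your cycle decomposition supplies the exact repair: the conclusion holds if and only if the map $\pi$ induced by matching rows of $G$ to rows of $g(k,1)$ and rows of $V$ to rows of $M_n$ has no even cycles (e.g.\ $\pi=\mathrm{id}$, constant $\pi$, or $n=1$, which covers the constructions the paper actually uses).
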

\begin{proof}
Our objective is to show that the block matrix  $\begin{bmatrix}
g(k,1) & -V\\
G & M_n
\end{bmatrix}$ satisfies the necessary  block matrix form of saturated design matrices in $\mathcal{G}_n(k,1)$. Then we show that it has non-zero determinant. 
\\
It is straightforward to observe that
 $\begin{bmatrix}
g(k,1) & -V\\
G & M_n
\end{bmatrix} $ as defined in the theorem satisfies the necessary block matrix form of elements in $\mathcal{G}_n(k,1)$.  In fact we know $g(k,1)$ is of the form  $\begin{bmatrix}
M & M\\
-N & N
\end{bmatrix} $. This implies that since $\mathcal{R}[G]\subseteq \mathcal{R}[g(k,1)]$ , every row of $G$ is of the form $\begin{bmatrix}
r^T & r^T
\end{bmatrix}$ or $\begin{bmatrix}
-r^T & r^T
\end{bmatrix}$,  where $r$ is  a vector of length $k$ with its first  entry being $+1$  and its remaining entries are from $\lbrace -1, 1\rbrace$. Therefore  $\begin{bmatrix}
g(k,1) & -V\\
G & M_n
\end{bmatrix} $ satisfies the necessary block matrix form of elements in $\mathcal{G}_n(k,1)$.
\\

Now we have
$det\Bigg (\begin{bmatrix}
g(k,1) & -V\\
G & M_n
\end{bmatrix}\Bigg ) = det(M_n)[ det\lbrace g(k,1) + VM_n^{-1}G\rbrace]$. We know $det(M_n) \neq 0$ since $M_n $ is an element of $\mathcal{M}_{n}\lbrace -1, 1\rbrace $. \\
It remains to show that  $det\lbrace g(k,1) + VM_n^{-1}G\rbrace\neq 0$. \\
\\
Let $M_n = \begin{bmatrix}
m_1^T\\
\vdots
\\
m_n^T
\end{bmatrix}
$ ,  then since $\mathcal{R}[V]\subseteq \mathcal{R}[M_n]$, we can write $V = \begin{bmatrix}
m_{j_1}^T\\
\vdots
\\
m_{j_{2k}}^T
\end{bmatrix}$ 
 \\where $j_i \in \lbrace 1,\cdots, n\rbrace $  and $i = 1, \cdots, 2k$.
\\
\\
 This implies that $VM_n^{-1} =\begin{bmatrix}
m_{j_1}^T\\
\vdots
\\
m_{j_{2k}}^T
\end{bmatrix}\begin{bmatrix}
m_1^T\\
\vdots
\\
m_n^T
\end{bmatrix}^{-1} = \begin{bmatrix}
\delta_{1j_1} & \cdots   &  \delta_{nj_1}\\
\vdots &\cdots & \vdots \\
\delta_{1j_{2k} } &  \cdots  &  \delta_{nj_{2k}}
\end{bmatrix} $ ,   where $\delta$ is the Kronecker delta function defined as
 $\delta_{pq} =  \begin{cases} 0, & \mbox{if } p\neq q \\ 1, & \mbox{if } p=q \end{cases}$. 
\\
\\
\\
Let $g(k,1) = \begin{bmatrix}
g_1^T\\
\vdots
\\
g_{2k}^T
\end{bmatrix}
$, then since $\mathcal{R}[G]\subseteq \mathcal{R}[g(k,1)]$ we can write $G= \begin{bmatrix}
g_{l_1}^T\\
\vdots
\\
g_{l_{n}}^T
\end{bmatrix}$ , 
 where $l_i \in \lbrace 1,\cdots, 2k\rbrace $  and $i = 1, \cdots, n$.
\\
\\
Therefore we have: $VM_n^{-1}G = \begin{bmatrix}
\delta_{1j_1} & \cdots   &  \delta_{nj_1}\\
\vdots &\cdots & \vdots \\
\delta_{1j_{2k} } &  \cdots  &  \delta_{nj_{2k}}
\end{bmatrix} \begin{bmatrix}
g_{l_1}^T\\
\vdots
\\
g_{l_{n}}^T
\end{bmatrix} = \begin{bmatrix}
g_{j_1}^T\\
\vdots
\\
g_{j_{2k}}^T
\end{bmatrix} $.
\\
\\
\\
 It follows that $  g(k,1) + VM_n^{-1}G = \begin{bmatrix}
g_1^T\\
\vdots
\\
g_{2k}^T
\end{bmatrix}
 +  \begin{bmatrix}
g_{j_1}^T\\
\vdots
\\
g_{j_{2k}}^T
\end{bmatrix}  $
\\
\\
It turns out that the matrix $  g(k,1) + VM_n^{-1}G $ is obtained  from $g(k,1)$ by adding  some of its rows to  itself. That means  that the determinant of $  g(k,1) + VM_n^{-1}G $ is proportional to the determinant of  $g(k,1)$ up to a non-zero constant. That is $det\lbrace g(k,1) + VM_n^{-1}G\rbrace \propto det\lbrace g(k,1)\rbrace $. Therefore the matrix $\begin{bmatrix}
g(k,1) & -V\\
G & M_n
\end{bmatrix}$ is a non-singular matrix. 

\end{proof}
\begin{remark}
Theorem (\ref{theorem2}) gives a general method for constructing an element of $\mathcal{G}_n(k,1)$.  Even though it does not directly address the problem  of constructing a saturated d-optimal design matrix in  $\mathcal{G}_n(k,1)$, it appears to be  useful if the interest of the experimenter is only the estimability of the vector  parameter $\beta =\begin{bmatrix}
F_1 & \cdots & F_k & F_{11} \cdots F_{1k} & F_1^e \cdots F_n^e
\end{bmatrix}$.
It turns out the problem of finding a saturated d-optimal design matrix for this particular $\beta$ is not trivial. We discuss the  d-optimality  problem below for some specific values of $k$ and $n$. 
\end{remark}. 
\subsection{ Saturated d-optimal  design matrix  in  $\mathcal{G}_{2k}(k,1)$\\ for  $k \equiv 0 \pmod{4}$ }
The saturated design matrices in  $\mathcal{G}_{2k}(k,1)$ are of order $4k$.  The corollary below 
\begin{corollary}
Suppose $g^*(k,1)$  is a d-optimal design matrix in  $ \mathcal{G}(k,1)$ with  $k \equiv 0 \pmod{4}$ . Then the matrix
 $\begin{bmatrix}
g^*(k,1) & -g^*(k,1)\\
g^*(k,1) & g^*(k,1)
\end{bmatrix}$ is a d-optimal design matrix in $\mathcal{G}_{2k}(k,1)$ .
\end{corollary}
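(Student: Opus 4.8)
The plan is to reduce the statement to the classical Hadamard maximal-determinant bound for order $4k$. Write $A := g^*(k,1)$, so the candidate matrix is $B := \begin{bmatrix} A & -A \\ A & A \end{bmatrix}$. First I would compute $|\det B|$. Left-multiplying $B$ by the unimodular block matrix $\begin{bmatrix} I_{2k} & 0 \\ -I_{2k} & I_{2k} \end{bmatrix}$ produces the block-triangular matrix $\begin{bmatrix} A & -A \\ 0 & 2A \end{bmatrix}$, so $\det B = \det(A)\,\det(2A) = 2^{2k}(\det A)^2$; equivalently one may observe $B = \begin{bmatrix} 1 & -1 \\ 1 & 1 \end{bmatrix} \otimes A$ and invoke the Kronecker determinant rule. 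Since $A$ is d-optimal in $\mathcal{G}(k,1)$, Corollary \ref{corol} gives $|\det A| = 2^k \Theta_k^2$, and hence $|\det B| = 2^{2k}\,(2^k \Theta_k^2)^2 = 2^{4k}\Theta_k^4$.

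Next I would evaluate $\Theta_k$ under the hypothesis $k \equiv 0 \pmod 4$. Flipping the sign of a row of a $\lbrace -1,1 \rbrace$-matrix leaves $|\det|$ unchanged and can be used to normalise the first column to $1_k$, so $\Theta_k$ coincides with the global maximal determinant of order-$k$ sign matrices, namely the Hadamard value $k^{k/2}$. Substituting, $|\det A| = 2^k k^{k} = (2k)^k$ and $|\det B| = 2^{4k} k^{2k} = (4k)^{2k}$; thus $A$ is a Hadamard matrix of order $2k$ and $B$ is a Hadamard matrix of order $4k$ (the Kronecker product of the order-$2$ Hadamard matrix with $A$).

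It then remains to establish membership and optimality. For membership I would check directly that $B$ has the structure demanded of an element of $\mathcal{G}_{2k}(k,1)$: because $A = \begin{bmatrix} M & M \\ -N & N \end{bmatrix}$ already encodes the relations $F_0 = F_1 * F_1$ and $F_{1j} = F_1 * F_j$, the vertical stacking in $B$ preserves them, so $B$'s mean column is $1_{4k}$ and each interaction column is the Sch\"ur product of the corresponding factor columns, while the last $2k$ columns play the role of the extra main effects $F_1^e, \dots, F_{2k}^e$; combined with $\det B \neq 0$, this puts $B$ in $\mathcal{G}_{2k}(k,1)$. For optimality, every element of $\mathcal{G}_{2k}(k,1)$ is a $\lbrace -1,1 \rbrace$-matrix of order $4k$ with $4k \equiv 0 \pmod 4$, so its determinant is bounded in absolute value by $(4k)^{2k}$; since $B$ attains this bound it is d-optimal, with $|\det B| = 2^{4k}\Theta_k^4 = (4k)^{2k}$.

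The step the whole argument hinges on is the identification $\Theta_k = k^{k/2}$, i.e. that the d-optimal $\mathcal{G}(k,1)$-design is in fact assembled from a genuine Hadamard matrix of order $k$; this is precisely what forces $|\det B|$ up to the global order-$4k$ Hadamard bound and lets me read off optimality without having to bound the determinant of a general block element $\begin{bmatrix} g(k,1) & -K_1 \\ K_2 & M_{2k} \end{bmatrix}$ of $\mathcal{G}_{2k}(k,1)$ directly. For $k \equiv 0 \pmod 4$ this presupposes the existence of a Hadamard matrix of order $k$, which the paper takes as available. A secondary point to handle with care is the membership check, since the bottom-right block of $B$ is $A$ itself, whose first column is $\begin{bmatrix} 1_k^T & -1_k^T \end{bmatrix}^T$ rather than $1_{2k}$; thus $B$ does not literally fit the block template with $M_{2k} \in \mathcal{M}_{2k}\lbrace -1,1 \rbrace$, and I would justify membership from the factor/Sch\"ur-product structure (or after relabelling the extra factors) rather than by quoting the block form verbatim.
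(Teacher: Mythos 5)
Your proof is correct and follows essentially the same route as the paper: identify the d-optimal $g^*(k,1)$ as a Hadamard matrix of order $2k$ (its blocks $M$, $N$ being order-$k$ Hadamard matrices), note that the Sylvester-type doubling $\begin{bmatrix} 1 & -1 \\ 1 & 1 \end{bmatrix} \otimes g^*(k,1)$ is then a Hadamard matrix of order $4k$, and conclude d-optimality from the global order-$4k$ Hadamard determinant bound. Your write-up is merely more explicit than the paper's on two points it glosses over, namely the determinant value $2^{2k}(\det A)^2 = (4k)^{2k}$ and the membership check of $B$ in $\mathcal{G}_{2k}(k,1)$ (including the observation that the bottom-right block need not have first column $1_{2k}$).
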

\begin{proof}
  For $k = 4k'$ we have $g(k,1)$ is of the form $\begin{bmatrix}
M_{4k'} & M_{4k'}\\
-N_{4k'} & N_{4k'}
\end{bmatrix} 
$, where both $M_{4k'}$ and $ N_{4k'}$ are elements of $\mathcal{M}_{4k'}\lbrace -1, 1\rbrace $. 
$|det(g(k,1)|= | det \Bigg (\begin{bmatrix}
M_{4k'} & M_{4k'}\\
-N_{4k'} & N_{4k'}
\end{bmatrix} \Bigg )|
$ is  maximal  if $M_{4k}$ and $M_{4k}$ are both Hadamard matrices in $\mathcal{M}_{4k'}\lbrace -1, 1\rbrace $. This implies  the design matrix in $ \mathcal{G}(k,1)$  with maximal absolute value of the  determinant is a Hadamard matrix $g^*(k,1)$.  Since  $g^*(k,1)$ is a Hadamard matrix then  $g^*_{2k}(k,1) = \begin{bmatrix}
g^*(k,1) & -g^*(k,1)\\
g^*(k,1) & g^*(k,1)
\end{bmatrix}$ is also a Hadamard matrix in $\mathcal{G}_{2k}(k,1)$. The proof is complete. 

\end{proof}
\subsection{D-Optimal design in  $\mathcal{G}_1(k,1)$ }
Any element of $\mathcal{G}_1(k,1)$ is a matrix of order $2k+1$ which has an extra factor denoted $F^e_1$.
We show here how to construct  saturated  and d-optimal saturated  design matrices  in $\mathcal{G}_1(k,1)$ . 
\begin{theorem}
\label{theorem4}
Let $g_1(k,1)$ be an element of  $\mathcal{G}_1(k,1)$ then $|det(g_1(k,1))| \leq 2^k\Theta_k\Theta_{k+1}$. 
\end{theorem}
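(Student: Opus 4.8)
The plan is to mirror the fold-over determinant computation behind Corollary~\ref{corol}, while tracking what the single extra column $F_1^e$ does to the block structure. Writing the pivot factor as $F_1 = [\,1_{f_+}^T\ -1_{f_-}^T\,]^T$ with $f_+ + f_- = 2k+1$, and splitting the extra column accordingly as $F_1^e = [\,p^T\ q^T\,]^T$, the same order-preservation argument used for $\mathcal{G}(k,1)$ forces every element of $\mathcal{G}_1(k,1)$ into the shape
\[
g_1(k,1)=\begin{bmatrix} M & M & p\\ -N & N & q \end{bmatrix},
\]
where $M=[\,1_{f_+}\ m_2\ \cdots\ m_k\,]$ is $f_+\times k$ and $N=[\,1_{f_-}\ n_2\ \cdots\ n_k\,]$ is $f_-\times k$, so that $g_1(k,1)$ is the $\mathcal{G}(k,1)$ block matrix with one extra column adjoined.

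First I would pin down the admissible split of $2k+1$. Each top row of $g_1(k,1)$ has the form $(x_1,\dots,x_k,x_1,\dots,x_k,y)$ and each bottom row the form $(-x_1,\dots,-x_k,x_1,\dots,x_k,y)$, so the $f_+$ top rows all lie in a single $(k+1)$-dimensional subspace of $\RR^{2k+1}$, and likewise the $f_-$ bottom rows lie in such a subspace. Hence $f_+\le k+1$ and $f_-\le k+1$, and combined with $f_++f_-=2k+1$ this forces $\{f_+,f_-\}=\{k,k+1\}$. (Any other split makes $g_1(k,1)$ singular, so that $|\det g_1(k,1)|=0$ already respects the bound; thus the elements we must bound are exactly those with this balanced split.)

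Next, a single fold-over column operation extracts the factor $2^k$: adding or subtracting the first block of columns to/from the second and pulling a $2$ out of each of the resulting $k$ columns gives $\det g_1(k,1)=\pm 2^k\det A$. Choosing the operation that annihilates the shorter of the two row-groups and then moving the appended column next to the surviving square block reduces $A$ to a block-triangular (respectively block-anti-triangular) matrix. When $f_+=k+1$ the two square diagonal blocks are $[\,M\mid p\,]$, of size $(k+1)\times(k+1)$, and $N$, of size $k\times k$; when $f_-=k+1$ the appended column instead joins $N$, yielding diagonal blocks $M$ (size $k$) and $[\,N\mid q\,]$ (size $k+1$). Either way the determinant factors as
\[
|\det g_1(k,1)| = 2^k\,|\det P|\,|\det Q|,
\]
where $P$ is the $(k+1)\times(k+1)$ block ($[\,M\mid p\,]$ or $[\,N\mid q\,]$) and $Q$ is the remaining $k\times k$ block. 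Since $g_1(k,1)$ is non-singular, so are $P$ and $Q$, and as each has leading column all ones we get $P\in\mathcal{M}_{k+1}\{-1,1\}$ and $Q\in\mathcal{M}_k\{-1,1\}$. Bounding $|\det P|\le\Theta_{k+1}$ and $|\det Q|\le\Theta_k$ yields $|\det g_1(k,1)|\le 2^k\Theta_k\Theta_{k+1}$.

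The step I expect to be the real content is the block-structure bookkeeping: recognizing that the lone extra column always merges with exactly one of $M,N$ to build a genuine $(k+1)\times(k+1)$ member of $\mathcal{M}_{k+1}\{-1,1\}$ while the other block remains a $k\times k$ member of $\mathcal{M}_k\{-1,1\}$, and handling the two splits of $2k+1$ uniformly. The permutation sign in the anti-triangular case is $(-1)^{k(k+1)}=+1$, so it never affects the absolute value, and everything else is the routine $2^k\det P\det Q$ fold-over identity already exploited in the proof of Theorem~\ref{thrm}.
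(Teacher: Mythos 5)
Your argument is correct, but it is not the paper's argument. The paper never works with the $(2k+1)$-column structure directly: it posits that $g_1(k,1)$ can be written as a bordered matrix $\begin{bmatrix} g(k,1) & -c\\ v^T & 1\end{bmatrix}$ with $g(k,1)\in\mathcal{G}(k,1)$, eliminates the border by a Schur complement to get $|\det(g(k,1)+cv^T)|$, absorbs the rank-one term into the blocks to obtain $2^k|\det(M+E_1)|\,|\det(N)|$ (equation (\ref{equation0})), and finally ``un-borders'' $M+E_1$, recognizing $|\det(M+E_1)|$ as the determinant of an element of $\mathcal{M}_{k+1}\lbrace -1,1\rbrace$. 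You instead keep the full matrix $\begin{bmatrix} M & M & p\\ -N & N & q\end{bmatrix}$, pin down the admissible row split $\lbrace f_+,f_-\rbrace=\lbrace k,k+1\rbrace$ by a dimension count, and reach block-triangular form by a single fold-over column elimination, so that the $\mathcal{M}_{k+1}\lbrace -1,1\rbrace$ and $\mathcal{M}_{k}\lbrace -1,1\rbrace$ factors appear as genuine diagonal blocks. Your route buys two things the paper glosses over: it actually proves the structural claim the paper only asserts (that an element of $\mathcal{G}_1(k,1)$ has a nonsingular $\mathcal{G}(k,1)$ corner, equivalently that the pivot factor must be balanced on $2k$ of the $2k+1$ runs), and it avoids the side case ``$M+E_1$ singular'' that the paper must dispose of separately. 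What the paper's formulation buys is reuse: its identity (\ref{equation0}) is quoted verbatim in the proof of Corollary \ref{corol3} to show the bound $2^k\Theta_k\Theta_{k+1}$ is attained.

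One slip you should fix: the elimination must annihilate the middle block of the \emph{longer} row group, not the shorter one --- subtract the first block of columns from the second when $f_+=k+1$, and add when $f_-=k+1$. If you annihilate the shorter group, its $k$ rows remain supported on $k+1$ columns (those of $[\,-N\mid q\,]$, say), the appended column still couples the two groups, and no block-triangular form with square diagonal blocks emerges. The diagonal blocks you display ($[\,M\mid p\,]$ with $N$ when $f_+=k+1$, resp.\ $M$ with $[\,N\mid q\,]$ when $f_-=k+1$, the latter up to negating $k$ columns, which is harmless for absolute values) are exactly the ones produced by the correct choice, so nothing downstream of that sentence changes.
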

\begin{proof}
Our objective is to show that the absolute value of the determinant of any element of $\mathcal{G}_1(k,1)$ is bounded above by $2\Theta_k\Theta_{k+1}$. \\
Any design matrix element of $\mathcal{G}_1(k,1)$  can be written as 
$\begin{bmatrix}
g(k,1) &- c\\
v^T &     1
\end{bmatrix}$,  where $g(k,1)$ is an element of $\mathcal{G}(k,1)$  and  $c$ is  vector of length $2k$ with entries from $\lbrace -1, +1\rbrace$.  Furthermore $v$ is  a vector of length $2k$ with entries from $\lbrace -1, +1\rbrace$ such that  $v^T$ is of the form $v^T = \begin{bmatrix}
r^T & r^T
\end{bmatrix}$ or  $\begin{bmatrix}
-r^T & r^T
\end{bmatrix}$ ,  where $r$ is a vector of length $k$ and its first entry is $+1$.\\
\begin{enumerate}
 \item Assume that  $v^T = \begin{bmatrix}
r^T & r^T
\end{bmatrix}$.   We know $g(k,1)$ can be written as $g(k,1) = \begin{bmatrix}
  M   &      M\\
  \\
 -N    &      N
\end{bmatrix} $,  where $M$ and $N$ are  elements of $\mathcal{M}_k\lbrace -1, + 1\rbrace$. Therefore 

$| det\Bigg (\begin{bmatrix}
g(k,1) &- c\\
v^T &     1
\end{bmatrix}\Bigg ) |  = |det \lbrace g(k,1) + cv^T\rbrace |
 =|det \Bigg ( \begin{bmatrix}
  M   &      M\\
  \\
 -N    &      N
\end{bmatrix} + \begin{bmatrix}
c_1v^T\\
\vdots \\
c_{2k}v^T
\end{bmatrix}\Bigg ) | =
    |det \Bigg ( \begin{bmatrix}
  M   &      M\\
  \\
 -N    &      N
\end{bmatrix} + \begin{bmatrix}
\begin{bmatrix}
c_1r^T & c_1r^T
\end{bmatrix}\\
\vdots \\
\begin{bmatrix}
c_{2k}r^T & c_{2k}r^T
\end{bmatrix}
\end{bmatrix}\Bigg ) |  =
|det \Bigg ( \begin{bmatrix}
  M + E_1   &      M + E_1\\
  \\
 -N + E_2    &      N + E_2
\end{bmatrix}\Bigg ) |  $, 
where $E_1 =\begin{bmatrix}
c_1r^T \\
\vdots
\\
c_kr^T
\end{bmatrix} $  and $E_2 =\begin{bmatrix}
c_{k+1}r^T 
\\
\vdots
\\
c_{2k}r^T
\end{bmatrix} $.
\\
\\
  If  $M + E_1$ is a singular matrix then  the matrix $\begin{bmatrix}
  M + E_1   &      M + E_1\\
  \\
 -N + E_2    &      N + E_2
\end{bmatrix} $ is a singular matrix and its determinant would be zero (see the proof of Theorem (\ref{thrm}) for more details about these particular form of matrices ). 
Now if  $M + E_1$ is a non-singular matrix then we have \\
$|det\lbrace\begin{bmatrix}
  M + E_1   &      M + E_1\\
  \\
 -N + E_2    &      N + E_2
\end{bmatrix}\rbrace |=\\
 |det( M+ E_1)|  |det \bigg ( N + E_2 -(-N+E_2)(M+E_1)^{-1}(M+E_1)\bigg ) |  =\\
  |det\lbrace M+ E_1\rbrace|  |det\lbrace 2N\rbrace |$.
  \\
It follows that , 
\\
\begin{equation}
\label{equation0}
  | det\lbrace \begin{bmatrix}
g(k,1) &- c\\
v^T &     1
\end{bmatrix}\rbrace |  =2^k|det\lbrace M+ E_1\rbrace|  |det\lbrace N\rbrace | 
\end{equation}
We know
\begin{equation}
\label{equation1}
 |det(N)| \leq \Theta_k
 \end{equation}  since $N$ is an element of $\mathcal{M}_k\lbrace -1,  1\rbrace $. Now let $a = \begin{bmatrix}
 c_1 & \cdots & c_k
 \end{bmatrix}^T$ then we have
\begin{equation}
\label{equation2}
 |det\lbrace M+ E_1\rbrace| = |det \lbrace 
  M + \begin{bmatrix}
c_1r^T\\
\vdots \\
c_{k}r^T
\end{bmatrix}\rbrace | = det \lbrace   \begin{bmatrix}
M &- a\\
r^T &     1
\end{bmatrix}  \rbrace  \leq \Theta_{k+1}
\end{equation}.

 since   $\begin{bmatrix}
M &- a\\
v_1^T &     1
\end{bmatrix} $ is an element of $\mathcal{M}_{k+1}\lbrace -1, + 1\rbrace$. 
From equation (\ref{equation0}),   equation (\ref{equation1}) , and equation (\ref{equation2}) we deduce that
$ | det\lbrace \begin{bmatrix}
g(k,1) &- c\\
v^T &     1
\end{bmatrix}\rbrace |  \leq 2^k\Theta_k\Theta_{k+1}  $
\item In a similar manner one can easily verify that  $ | det\lbrace \begin{bmatrix}
g(k,1) &- c\\
v^T &     1
\end{bmatrix}\rbrace |  \leq 2^k\Theta_k\Theta_{k+1}  $ for $ v^T = \begin{bmatrix}
-r^T & r^T
\end{bmatrix}$
\end{enumerate}
\end{proof}
In Theorem (\ref{theorem4}) we showed the determinant of any saturated design matrix  in  $\mathcal{G}_1(k,1)$ is bounded above by $2^k\Theta_k\Theta_{k+1}$.  Therefore if we can  construct an element of   $\mathcal{G}_1(k,1)$ for which the absolute value of the determinant is $2^k\Theta_k\Theta_{k+1}$ then that element is a saturated  d-optimal design matrix in  $\mathcal{G}_1(k,1)$.  
 In the corollary below we give an element of  $\mathcal{G}_1(k,1)$  for which the absolute value of the determinant is $2^k\Theta_k\Theta_{k+1}$. 

\begin{corollary}
\label{corol3}
Let $\begin{bmatrix}
M_0 &- c_0\\
r_0^T &     1
\end{bmatrix} $ be a matrix  in $\mathcal{M}_{k+1}\lbrace -1, 1\rbrace $ with maximal absolute value of determinants $\Theta_{k+1}$,   and $N_0$  a  matrix  in $\mathcal{M}_{k}\lbrace -1, 1\rbrace $ with maximal absolute value of determinant $\Theta_{k}$,  where $M_0$ is a square  matrix of order $k$,   $c_0$ and $r_0$ are vectors of length $k$. Furthermore let $ v_0^T = \begin{bmatrix}
r_0^T & r_0^T
\end{bmatrix} $, $ \tilde{c}= \begin{bmatrix}
c_0^T & c_0^T
\end{bmatrix}^T $ and  $g_0(k,1) = \begin{bmatrix}
M_0 & M_0\\
-N_0&   N_0
\end{bmatrix} $ .
Then the design matrix  $\begin{bmatrix}
g_0(k,1) &- c*\\
v_0^T  &     1
\end{bmatrix} $ is a d-optimal design in $\mathcal{G}_1(k,1)$. Furthermore 
 $|det\Bigg (\begin{bmatrix}
g_0(k,1) &- \tilde{c}\\
v_0^T  &     1
\end{bmatrix}\Bigg )| = 2^k\Theta_k\Theta_{k+1} $
\end{corollary}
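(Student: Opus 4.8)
The plan is to recognize this corollary as the equality (attainment) case of the upper bound established in Theorem \ref{theorem4}, so that the whole argument reduces to evaluating the determinant formula derived there on the specific matrix exhibited. First I would check that the candidate $\begin{bmatrix} g_0(k,1) & -\tilde{c}\\ v_0^T & 1 \end{bmatrix}$ genuinely lies in $\mathcal{G}_1(k,1)$. Since $\begin{bmatrix} M_0 & -c_0 \\ r_0^T & 1 \end{bmatrix}\in\mathcal{M}_{k+1}\{-1,1\}$, its first column is $1_{k+1}$, which forces the first column of $M_0$ to be $1_k$ and the first entry of $r_0$ to be $+1$; moreover $M_0$ is nonsingular as the leading block of an element of $\mathcal{M}_{k+1}\{-1,1\}$. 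With $N_0\in\mathcal{M}_k\{-1,1\}$, Theorem \ref{thrm} gives $g_0(k,1)=\begin{bmatrix} M_0 & M_0\\ -N_0 & N_0 \end{bmatrix}\in\mathcal{G}(k,1)$, and $v_0^T=\begin{bmatrix} r_0^T & r_0^T \end{bmatrix}$ together with $\tilde{c}=\begin{bmatrix} c_0^T & c_0^T \end{bmatrix}^T$ have exactly the block forms required of an element of $\mathcal{G}_1(k,1)$.

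Next I would apply equation (\ref{equation0}) from the proof of Theorem \ref{theorem4} with the substitutions $M=M_0$, $N=N_0$, $c=\tilde{c}$, $r=r_0$, which are precisely the ``case~1'' hypotheses $v^T=\begin{bmatrix} r^T & r^T \end{bmatrix}$. This yields $\bigl|\det\bigr| = 2^k\,|\det(M_0+E_1)|\,|\det(N_0)|$, where $E_1$ is the rank-one correction built from the first $k$ coordinates of $\tilde{c}$. The folding $\tilde{c}=\begin{bmatrix} c_0^T & c_0^T \end{bmatrix}^T$ is exactly what guarantees that those first $k$ coordinates are $c_0$, so that $E_1 = c_0 r_0^T$ and equation (\ref{equation2}) applies verbatim; the bottom correction $E_2$ (also $c_0 r_0^T$ here) plays no role, since it cancels in the Schur-complement step that produced the factor $\det(2N)$ in Theorem \ref{theorem4}.

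Then, invoking equation (\ref{equation2}), $|\det(M_0+E_1)| = \bigl|\det\begin{bmatrix} M_0 & -c_0 \\ r_0^T & 1 \end{bmatrix}\bigr| = \Theta_{k+1}$ by the maximality hypothesis on that bordered matrix, and $|\det(N_0)| = \Theta_k$ by the maximality of $N_0$ in $\mathcal{M}_k\{-1,1\}$. Substituting gives $\bigl|\det\begin{bmatrix} g_0(k,1) & -\tilde{c}\\ v_0^T & 1 \end{bmatrix}\bigr| = 2^k\Theta_k\Theta_{k+1}$, which coincides with the upper bound of Theorem \ref{theorem4}; hence the matrix is d-optimal in $\mathcal{G}_1(k,1)$.

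This proof has no genuine obstacle — it is an equality-case verification — but the one point demanding care is the bookkeeping in the middle paragraph: confirming that the rank-one term produced by the folded vector $\tilde{c}$ against $v_0^T$ restricts, on the top $k$ rows, to exactly the matrix $c_0 r_0^T$ feeding equation (\ref{equation2}), and that the bottom $N_0$-block correction cancels out of the determinant just as in Theorem \ref{theorem4}. Once that alignment is checked, the two maximality hypotheses close the argument.
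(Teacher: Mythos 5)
Your main computation is exactly the paper's own proof: verify the block form, invoke equation (\ref{equation0}) to get $2^k\,|\det(M_0+E_1)|\,|\det(N_0)|$, and then use equation (\ref{equation2}) together with the two maximality hypotheses to conclude $2^k\Theta_k\Theta_{k+1}$, which matches the bound of Theorem (\ref{theorem4}) and hence gives d-optimality. Your bookkeeping remark (that the folded $\tilde{c}$ makes the top-block correction exactly $E_1=c_0r_0^T$, while $E_2$ cancels in the Schur complement) is precisely what the paper relies on.

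One sub-claim in your membership check is false, although it turns out to be harmless. A leading $k\times k$ block of an element of $\mathcal{M}_{k+1}\lbrace -1,1\rbrace$, even one of maximal determinant, need not be nonsingular: for $k=2$ the matrix $\begin{bmatrix} 1&1&1\\ 1&1&-1\\ 1&-1&1\end{bmatrix}$ has first column $1_3$ and $|\det|=4=\Theta_3$, yet its leading $2\times 2$ block $\begin{bmatrix}1&1\\1&1\end{bmatrix}$ is singular. So you cannot deduce $M_0\in\mathcal{M}_k\lbrace -1,1\rbrace$ and invoke Theorem (\ref{thrm}) to place $g_0(k,1)$ in $\mathcal{G}(k,1)$ this way. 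The repair is immediate and stays within your argument: the structural (column) form of the candidate is clear by inspection, and its nonsingularity --- hence its validity as a saturated design in $\mathcal{G}_1(k,1)$ --- follows from the determinant you compute, since equation (\ref{equation0}) only requires $M_0+E_1=M_0+c_0r_0^T$ to be nonsingular, and $|\det(M_0+c_0r_0^T)|=\bigl|\det\begin{bmatrix} M_0 & -c_0\\ r_0^T & 1\end{bmatrix}\bigr|=\Theta_{k+1}\neq 0$ by hypothesis; the final value $2^k\Theta_k\Theta_{k+1}\neq 0$ then certifies the design. (The paper itself glosses over this point with ``it is not hard to see,'' so this is a wrinkle in the source as much as in your write-up.)
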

\begin{proof}
It is not hard to see that  the matrix $\begin{bmatrix}
g_0(k,1) &- \tilde{c}\\
v_0^T  &     1
\end{bmatrix} $ satisfies the form of an element of $\mathcal{G}_1(k,1)$. We  show that the absolute value of its determinant attains $2^k\Theta_k\Theta_{k+1}$. The proof is similar to the proof of Theorem (\ref{theorem4}).  
 From Equation (\ref{equation0} ) in the proof of Theorem (\ref{theorem4}) we have :
 
 \begin{equation}
\label{equation3}
  | det\lbrace \begin{bmatrix}
g_0(k,1) &- \tilde{c}\\
v^T &     1
\end{bmatrix}\rbrace |  =2^k|det\lbrace M_0 + \tilde{E_1}\rbrace|  |det\lbrace N_0\rbrace | ,
\end{equation}
where $\tilde{E_1} =\begin{bmatrix}
\tilde{c}_1r_0^T \\
\vdots
\\
\tilde{c}_kr_0^T
\end{bmatrix} $ and $c_0 =\begin{bmatrix}
\tilde{c}_1 \\
\vdots
\\
\tilde{c}_k
\end{bmatrix} $.
We know that  $|det( N_0) | =\Theta_{k}$ and also \\
\begin{equation}
\label{equation4}
|det( M_0 + \tilde{E_1})|  = |det \Bigg (
  M_0 + \begin{bmatrix}
\tilde{c}_1r_0^T \\
\vdots
\\
\tilde{c}_kr_0^T
\end{bmatrix}\Bigg ) | = |det \Big (   \begin{bmatrix}
M_0 &- c_0\\
r_0^T &     1
\end{bmatrix}  \Big ) | = \Theta_{k+1}
\end{equation}.
It follows that  $|det\Bigg (\begin{bmatrix}
g_0(k,1) &- \tilde{c}\\
v_0^T  &     1
\end{bmatrix}\Bigg )| = 2^k\Theta_k\Theta_{k+1} $.
The proof is complete.
\end{proof}
 \subsection{Algorithm for the construction of an element of $\mathcal{G}_1(k,1)$}
 \label{goptimal2}
 We use Theorem (\ref{theorem4}) and Corollary (\ref{corol3}) to develop  an algorithm for the construction of  saturated and  d-optimal saturated design matrices in $\mathcal{G}_1(k,1)$.
 \begin{itemize}
 \item \textbf{Step 1} : Construct/Select two matrices $M_{k+1}$  and $N_k$ from $\mathcal{M}_{k+1}\lbrace -1,1\rbrace $ and $\mathcal{M}_{k}\lbrace -1,1\rbrace $ respectively (For  a d-optimal design select the matrix $M_{k+1}$ and $N_k$ with maximal absolute value of determinant in $\mathcal{M}_{k+1}\lbrace -1,1\rbrace $ and $\mathcal{M}_{k}\lbrace -1,1\rbrace $ respectively.)
 \item \textbf{Step 2}: If the entry of $[M_{k+1}]_{k+1, k+1} = -1$,  then multiply the last column of $M_{k+1}$ by $-1$. After that write $M_{k+1}$ on the form $M_{k+1} =\begin{bmatrix} 
  M_k  & -c_0\\
  
 v_0^T  & 1
\end{bmatrix} $, where $M_k$ is an element of $\mathcal{M}_{k}\lbrace -1,1\rbrace $ and $c_0$ and $v_0$ are vectors of length $k$.  Now set $v := \begin{bmatrix} 
v_0^T & v_0^T
\end{bmatrix}^T$ , $c = \begin{bmatrix} 
c_0^T & c_0^T
\end{bmatrix}^T $ , $v = \begin{bmatrix} 
v_0^T & v_0^T
\end{bmatrix}^T$ and $g_0(k,1) = \begin{bmatrix} 
M_k & M_k \\
-N_k & N_k
\end{bmatrix}$
\item \textbf{Step 3}: The matrix  $\begin{bmatrix} 
g_0(k,1) & -c\\
-v^T & 1
\end{bmatrix}$ is a saturated design matrix in $\mathcal{G}_1(k,1)$. It is a d-optimal saturated design matrix if $M_{k+1}$ and $M_k$ are both d-optimal design matrices in $\mathcal{M}_{k+1}\lbrace -1,1\rbrace $ and $\mathcal{M}_{k}\lbrace -1,1\rbrace $,  respectively. 

 \end{itemize}

\section{ Discussion about $\mathcal{G}_1(k,1)$  and $\mathcal{G}(k,1)$ :  local  and global maximal determinants  and   local upper bounds  }
We have shown so far that  maximal determinants of elements in $\mathcal{G}_1(k,1)$ and $\mathcal{G}(k,1)$ are $2^k\Theta_k\Theta_{k+1}$  and $2^k\Theta_k^2$ respectively.  Thus constructing d-optimal design matrices in $\mathcal{G}_1(k,1)$ and $\mathcal{G}(k,1)$ is only possible if one knows how to construct global d-optimal design matrices  in both $\mathcal{M}_k\lbrace -1, 1\rbrace $ and $\mathcal{M}_{k+1}\lbrace -1, 1\rbrace $. As we discussed in the introduction the construction of the d-optimal design matrix in $\mathcal{M}_k\lbrace -1, 1\rbrace $ and finding a tight upper bound for the determinant of elements in $\mathcal{M}_k \lbrace -1, 1\rbrace$ are  not  easy problems. There has been a lot of ongoing research on the topic for the past hundred years and yet  a lot still has to be done. We use the Ehlich's, Barba's,  Wojtas' and Hadamard's  determinant  upper bounds discussed in the introduction to deduce  determinant upper bounds for elements in $\mathcal{G}_1(k,1)$ and $\mathcal{G}_1(k,1)$ that we display in  Table (\ref{bound1}) and Table (\ref{bound2}). It is worth pointing out that the upper bounds displayed in those tables  pertain  only to  matrices in  $\mathcal{G}_1(k,1)$ and $\mathcal{G}_1(k,1)$  which are of order $2k+1$ and $2k$,  respectively.  We shall therefore  refer to the upper  bounds  for matrices in $\mathcal{M}_{2k+1}\lbrace -1, +1 \rbrace$ and $\mathcal{M}_{2k}\lbrace -1, +1 \rbrace$ as global upper bounds and call the ones in Table (\ref{bound1}) and Table (\ref{bound2}) as local upper bounds  in  $\mathcal{G}_1(k,1)$ and $\mathcal{G}_1(k,1)$. 
We use the algorithms developed in section (\ref{goptimal1}) and Section (\ref{goptimal2}) to \\ construct  four d-optimal design matrices $g^*(5,1)$, $g^*(15,1)$, $g^*(16,1)$, $g_1^*(15,1)$  in $\mathcal{G}(5,1)$ ,   $\mathcal{G}(16,1)$,  $\mathcal{G}(15,1)$  and $\mathcal{G}_1(15,1)$,  repectively. Following the algorithm in Section (\ref{goptimal1})  the construction of $g^*(5,1)$  requires an element of $\mathcal{M}_5\lbrace -1, 1\rbrace $ with maximal absolute value of determinant. The computation is straightforward using R programming language.  For  $g^*(15,1)$ we  need a maximal  absolute value determinant element in $\mathcal{M}_{15}\lbrace -1, 1\rbrace$ which is computationally difficult without a clever algorithm . Fortunately  the matrix $M^*_{15}$ displayed in Figure (\ref{optimal15}) has maximal absolute value of determinant in $\mathcal{M}_{15}\lbrace -1, 1\rbrace$. The construction of $M^*_{15}$ is due to  Smith [\cite{smith}], Cohn [\cite{cohn1}] , [\cite{cohn2}] and Orrick [\cite{orrick1}]. The matrix $M^{*+}_{15}$ displayed in Figure (\ref{optimal15+}) is a normalized version of the matrix $M^*_{15}$.  The matrix $M^{*-}_{15}$ displayed in Figure (\ref{optimal15-}) is just  $- M^{*+}_{15}$. We use the matrices $M^{*+}_{15}$  and $M^{*-}_{15}$ to construct the d-optimal design matrix $g^*(15,1)$ shown in Figure (\ref{optimal15k1}). The design matrix shown in Figure (\ref{optimal16}) is the d-optimal saturated design matrix $g^*(16,1)$ constructed using a Hadamard matrix $H_{16}$  of order $16$ displayed in Figure (\ref{hadamard1}). Following  the algorithm in Section (\ref{goptimal2}) we use $H_{16}$ , $M^{*+}_{15}$ and $M^{*-}_{15}$ to construct the d-optimal saturated design $g_1^*(15,1)$ that we display in Figure (\ref{optimal1516k}). 
\\
\\
In table (\ref{dettable}) we compare the maximal determinants attained by elements in $\mathcal{G}(5,1)$ , $\mathcal{G}(15,1)$,  $\mathcal{G}(16,1)$ and  $\mathcal{G}_1(15,1)$ with their respective  local upper bounds. As we can see from the table  $\mathcal{G}(5,1)$ and $\mathcal{G}(16,1)$ attain $100\%$ of their corresponding local upper bounds. Thus the local upper bounds are  tight for  $\mathcal{G}(5,1)$ and $\mathcal{G}(16,1)$. The maximal determinant for $\mathcal{G}(15,1)$  attains $94\%$  of its local upper bound. That of $\mathcal{G}_1(15,1)$ attains $97\%$ of its local upper bound.  On the other hand the maximal determinants of $\mathcal{G}(5,1)$ and $\mathcal{G}(16,1)$ both attain $100\%$ of the global maximal determinant in $\mathcal{M}_{10}\lbrace -1, 1\rbrace$ and  $\mathcal{M}_{32}\lbrace -1, 1\rbrace$,  respectively. The maximal determinants of $\mathcal{G}(15,1)$ and $\mathcal{G}_1(5,1)$  only attain $54\%$ and $72\%$ of their global maximal determinants in $\mathcal{M}_{30}\lbrace -1, 1\rbrace$  and $\mathcal{M}_{31}\lbrace -1, 1\rbrace$,  respectively. It is worth pointing out the global maximal determinant in $\mathcal{M}_{30}\lbrace -1, 1\rbrace$  is due to Ehlich [\cite{Ehlich1}].  The maximal determinant matrix in  $\mathcal{M}_{32}\lbrace -1, 1\rbrace$  is just a Hadamard matrix which can easily be constructed.  The maximal determinant value $25515\times 2^{14}$ for matrices in $\mathcal{M}_{15}\lbrace -1, 1\rbrace$ was found by Orrick [\cite{orrick1}].  The value $2^{30}\times 784 \times 7^{13}$ is the determinant of a matrix in $\mathcal{M}_{31}\lbrace -1, 1\rbrace$ reported by Hiroki Tamura on  August $26^{th}$, 2005 to  the website \begin{verbatim}
http://www.indiana.edu/~maxdet/d31.html
\end{verbatim} . Though the above value  has not been proved to be optimal,  it is known to be the highest determinant value of an element of $\mathcal{M}_{31}\lbrace -1, 1\rbrace$ found so far. All the other determinants in the table are easily deduced from our theorems and the determinants reported.

\begin{table}
\begin{center}
\[
\begin{cases}
|det\lbrace g_1(k \text{,}1)\rbrace | \leq 2^k k^k\sqrt{(2k+1)}\\
 \quad \text{for } \quad  k \equiv 0 \pmod{4}  \\
 \\
|det\lbrace g_1(k \text{,}1)\rbrace | \leq 2^k (2k)(k-1)\sqrt{ (k-1)^{k-1}(2k-1) }
\\
 \quad  \text{for } \quad k \equiv 1 \pmod{4} \\
 \\
|det\lbrace g_1(k \text{,}1)\rbrace | \leq 2^k (2k-2)(k-2)\sqrt{ (k-2)^{(k+1-s)}(k-2 + 4r)^u(k + 2 + 4r)^v(1-\frac{ur}{k-2 + 4r} - \frac{v(r + 1)}{k + 2 + 4r}  )}
\\
  \quad \text{for } \quad k \equiv 2 \pmod{4} \\
   \\
 |det\lbrace g_1(k \text{,}1)\rbrace | \leq 2^k(k+1)^{\frac{k+1}{2}} \sqrt{ (k-3)^{(k-s)}(k-3 + 4r)^u(k + 1 + 4r)^v(1-\frac{ur}{k-3 + 4r} - \frac{v(r + 1)}{k + 1 + 4r}  )} 
 \\
  \quad  \text{for } \quad k \equiv 3 \pmod{4} 

\end{cases}
\]
\end{center}

\caption{Local upper bounds for the determinant of elements in  $\mathcal{G}_1(k,1)$}
\label{bound1}
\end{table}

\begin{table}
\begin{center}
\[
\begin{cases}
|det\lbrace g(k \text{,}1)\rbrace | \leq 2^k k^k\\
 \quad \text{for } \quad  k \equiv 0 \pmod{4}  \\
 \\
|det\lbrace g(k \text{,}1)\rbrace | \leq 2^k (k-1)^{k-1}(2k-1)
\\
 \quad  \text{for } \quad k \equiv 1 \pmod{4} \\
 \\
|det\lbrace g(k \text{,}1)\rbrace | \leq 2^k (2k-2)^2(k-2)^2
\\
  \quad \text{for } \quad k \equiv 2 \pmod{4} \\
   \\
 |det\lbrace g(k \text{,}1)\rbrace | \leq 2^k  (k-3)^{(k-s)}(k-3 + 4r)^u(k + 1 + 4r)^v(1-\frac{ur}{k-3 + 4r} - \frac{v(r + 1)}{k + 1 + 4r}  )
 \\
  \quad  \text{for } \quad k \equiv 3 \pmod{4} 

\end{cases}
\]
\end{center}
\caption{Local upper bounds for the determinant of elements in  $\mathcal{G}(k,1)$}
\label{bound2}
\end{table}

\begin{table}
\begin{center}
  \footnotesize
\setlength{\arraycolsep}{2pt} 
\medmuskip = 0.5mu 
 \begin{tabular}{|c c c c c|} 
 \hline
   Set of saturated design matrices & $\mathcal{G}(5,1)$ & $\mathcal{G}(15,1)$ & $\mathcal{G}(16,1)$ & $\mathcal{G}_1(15,1)$   \\ [0.5ex] 
 \hline\hline
$ p$ (order of the matrices ) & $10$ & $30$ &$32$ & $31$ \\
 \\
 
  Local maximal determinants & $2^5\times 48^2$ & $2^{15}\times 25515^2\times 2^{28}$  & $2^{16}\times 16^{16}$  &$2^{15}\times 25515\times 2^{14}\times 16^{8}$ \\ 
 \\
\% of   local upperbounds attained & $100$ & $94.23$ & $100$ & $97.07$\\
 \\
 Global maximal determinants  of order $p$ & $18\times 2^{12}$ & $203\times 2^{29}\times 7^{13}$ & $2^{31}\times 16 \times 8^{15} $ & $2^{30}\times 784 \times 7^{13} ??$\\
 \\
  \% of global determinants attained  &  $100$ & $54.23$  &$100$ &$72.13$  \\
 \hline
\end{tabular}
\caption{Upper bounds, local and global determinants comparison for  $\mathcal{G}(5,1)$ , $\mathcal{G}(15,1)$ , $\mathcal{G}(16,1)$ and  $\mathcal{G}_1(15,1)$  }
\label{dettable}
\end{center}
\end{table}
\newpage

 \section{Concluding remarks}
 The construction of saturated design matrices for two level factorial experiment have gained a lot of interest over a long period of time by both mathematicians and statisticians. In general mathematicians are interested in  finding a matrix with maximal determinant in $\mathcal{M}_k\lbrace -1, 1\rbrace $,  as well as  investigating the spectrum of the determinant function which is the set of the value(s) taken by  the $|det(D_k)|/2^{k-1}$ for $D_k$ element of $\mathcal{M}_k\lbrace -1, 1\rbrace $ . Thus  numerous papers have been written about the classification of saturated design matrices of fixed order  via the spectrum of the determinant function.  The spectra of the determinant function $S_k$ for $\lbrace -1, +1\rbrace$-matrices  of order $k$ are  well known in the literature for order up to $11$. The spectrum of order $k=8$ is due to Metropolis, et al.  [\cite{metropolis}]. For $k = 9$ and $k=10$,  the spectra were computed by \v{Z}ivkovi\'c [\cite{zivkovic}] and the spectrum for $k = 11$ is due to Orrick [\cite{orrick1}].
Furthermore  many other papers have investigated d-optimal saturated design matrices for a  fix order.  Orrick [\cite{orrick1}] constructed a  d-optimal design matrix of order $15$.  T. Chadjipantelis, et al.  [\cite{chad}] came up with a d-optimal design of order $21$. The the d-optimal design matrix discussed by these papers is a matrix  with  maximal  absolute value  of the  determinant  in  $\mathcal{M}_k\lbrace -1, 1 \rbrace $. The design statisticians on the other hand  are not only interested in the global d-optimal design matrices in $\mathcal{M}_k\lbrace -1, 1 \rbrace $ but also they are interested in the local d-optimal design matrices that satisfy certain restrictions on the columns of matrices in 
$\mathcal{M}_k\lbrace -1, 1 \rbrace $. In fact more than often It is desirable for  design statisticians to find a d-optimal design matrix to estimate the mean, the main effects and a selected number of two-factor interactions. The restriction imposed by the interactions on the columns of saturated design matrices makes it  impossible to  construct a saturated design matrix  that achieves the maximal determinant in  $\mathcal{M}_k\lbrace -1, 1 \rbrace $ under certain conditions. The work we did in the current  paper is a good illustration.  We showed that  the construction of  saturated d-optimal design matrices in  $\mathcal{G}_1(k,1)$  and $\mathcal{G}(k,1)$  is equivalent to finding   matrices with maximal determinant   in $\mathcal{M}_k\lbrace -1, 1\rbrace$ and $\mathcal{M}_{k+1}\lbrace -1, 1\rbrace$. Thus this problem is just as hard as the Hadamard determinant problem discussed in the introduction.  

\section{Acknowledgements}
This work is partially supported by the US National Science Foundation(NSF Grant 1809681).


\bibliography{uictman}

\begin{thebibliography}{99}
\bibitem{Barba}
G. Barba (1933), Intorno al teorema di Hadamard sui determinanti a valore massimo.
\emph{Giorn. Mat. Battaglini}  71,  70-86.
\bibitem{chad}
 T. Chadjipantelis, S. Kounias and C. Moyssiadis (1987), The maximum determinant of 21$ \times $21 (+1, −1)-matrices and D-optimal designs, \emph{J. Statist. Plann. Inference,} 16  167-178.
 \bibitem{cohn1}
 J. H. E. Cohn (1989), On determinants with elements ±1, \emph{II, Bull. London Math. Soc.} 21  36-42.
 \bibitem{cohn2}
 J. H. E. Cohn (2000), Almost D-optimal designs, \emph{Utilitas Math.} 57  121-128.

\bibitem{Ehlich1}
H. Ehlich (1964), Determinantenabsch\"atzungen f\"ur bin\"are Matrizen. 
\emph{ Math. Z.} 83, 123-132.
\bibitem{Ehlich2}
H. Ehlich (1964), Determinantenabsch\"atzungen f\"ur bin\"are Matrizen
 mit $n\equiv3\pmod{4}$.   \emph{ Math. Z.}  84, 438-447.
\bibitem{Hedayat1}
A.S.  Hedayat ,  H. Pesotan  (1992),
Two-level factorial designs for main-effects and selected two-factor interactions \emph{Statistica Sinica} 2 , 453-464.
\bibitem{Hedayat2}
A.S. Hedayat ,  H. Pesotan  (2007),
Tools for constructing optimal two-level factorial designs for a linear model containing main effects and one two-factor interaction
\emph{J. Statist. Plann. Inference,} 137(4):1452--1463.
\bibitem{Hedayat3} A. S. Hedayat and Haiyuan Zhu (2011),  An effective algorithm for searching for D-optimal saturated two-level factorial designs. 
\emph{ J. Stat. Theory Appl.,} 10(2):209--227. 





 \bibitem{metropolis}
 N. Metropolis (1969), Spectra of determinant values in (0,1) matrices. In A. O. L. Atkin and B. J. Birch, editors, \emph{Computers in Number Theory:\\
 Proceedings of the Science Research Atlas Symposium No. 2 held at Oxford, 18-23 August, 1969, Academic Press, London, 1971, 271-276.}


\bibitem{orrick1}	
W. P. Orrick (2005), The maximal $\lbrace -1,1\rbrace$-determinant of order 15. \emph{Metrika} 62, 195-219.
\bibitem{smith}
 Warren D. Smith (1988), Studies in Computational Geometry Motivated by Mesh Generation, \emph{ Ph. D. dissertation, Princeton University} .
 
 \bibitem{Wojtas}
 M. Wojtas (1964), On Hadamard's inequality for the determinants of order non-divisible by 4.
 \emph{Colloq. Math.}, 12: 73-83.
 
\bibitem{zivkovic}
 M. \v{Z}ivkovi\'c  (2006), Classification of small (0 , 1) matrices, 
 \emph{Linear Algebra Appl.} 414 , 1, 310-346.

\end{thebibliography}

\newpage

\begin{center}
\begin{figure}
\includegraphics[trim= {0, 10cm, 0, 0} , clip, scale =0.8]{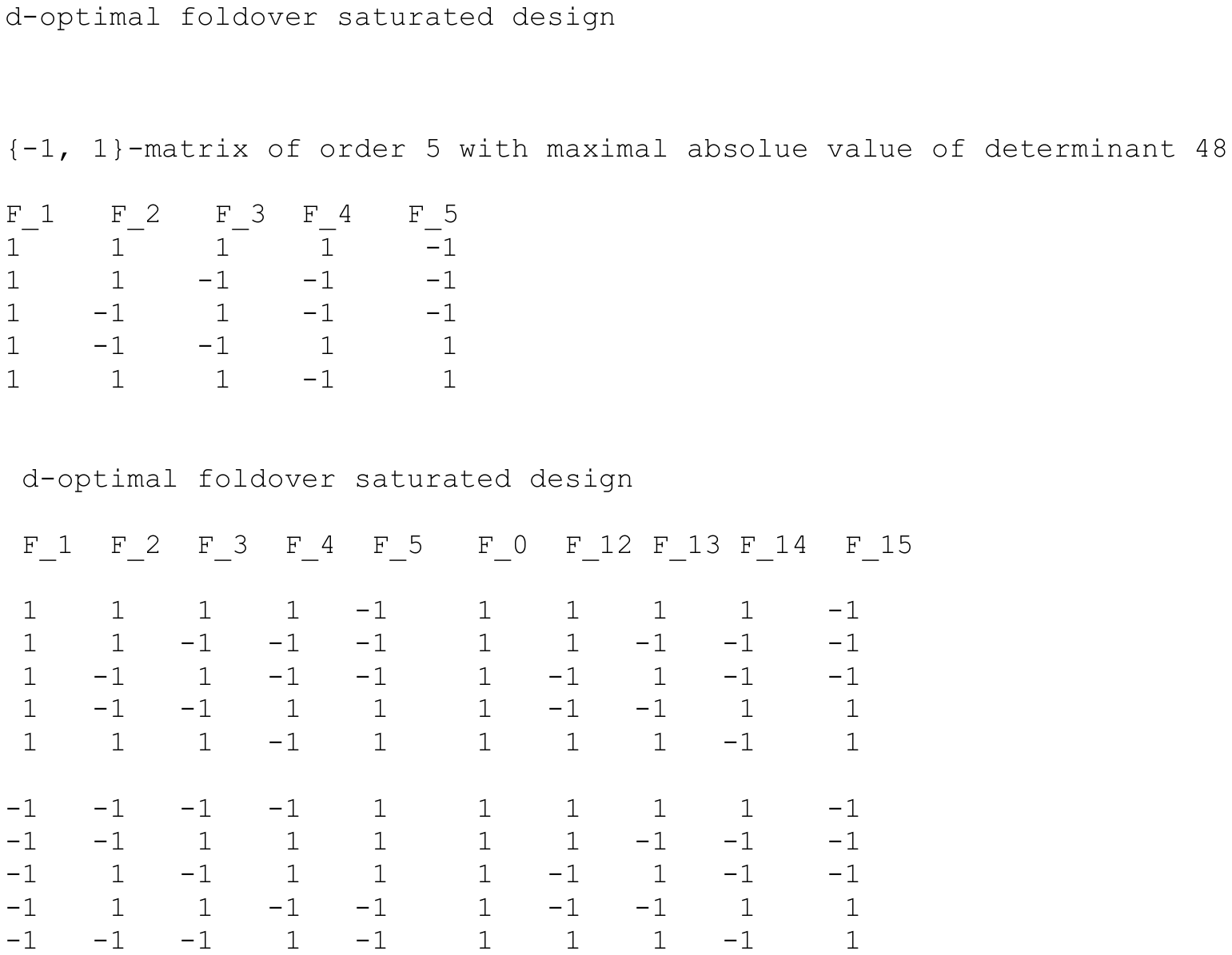}
\caption{d-optimal design for $F_0 ;F_1; F_2; F_3; F_4; F_5;  F_{12};  F_{13}; F_{14};   \text{and}   F_{15}$}
\label{doptimal5}
\end{figure}
\end{center}
\begin{figure}
\footnotesize
\setlength{\arraycolsep}{2.5pt} 
\medmuskip = 1mu 
$
M^*_{15}=\begin{bmatrix}
- & - & -& - & + & + & +& + & + & - &+& + & + & + & -\\
-&-&-&+&-&+&+&-&+&+&+&+&-&+&+\\
-&-&-&+&+&-&+&+&-&+&+&-&+&+&+\\
-&+&+&-&-&+&+&+&-&+&+&-&-&-&-\\
+&+&-&+&-&-&+&+&+&-&+&-&-&-&-\\
+&-&+&-&+&-&+&-&+&+&+&-&-&-&-\\
+&+&+&+&+&+&-&+&+&+&+&-&-&+&-\\
-&+&+&-&-&-&-&-&+&-&+&-&+&+&+ \\
+&+&+&-&-&-&+&+&+&+&-&+&+&+&+\\
+&-&+&-&-&-&-&+&-&-&+&+&-&+&+ \\
+&+&-&-&-&-&-&-&-&+&+&+&+&+&-\\
+&+&-&-&+&+&+&-&-&-&-&-&-&+&+\\
+&-&+&+&-&+&+&-&-&-&-&-&+&+&- \\
-&+&+&+&+&-&+&-&-&-&-&+&-&+&-\\
+&+&+&+&+&+&+&-&-&-&+&+&+&-&+

\end{bmatrix}
$
\caption{Maximal determinant matrix in $\mathcal{M}_{15}\lbrace -1, 1\rbrace $. A result of the work of  Smith [\cite{smith}], Cohn [\cite{cohn1}] , [\cite{cohn2}] and Orrick [\cite{orrick1}] }
\label{optimal15}
\end{figure}

\begin{figure}
\footnotesize
\setlength{\arraycolsep}{2.5pt} 
\medmuskip = 1mu 
$
M^{*+}_{15}=\begin{bmatrix}
+ & + & +& + & - & - & -& - & - & + &-& - & - & - &+\\
+&+&+&-&+&-&-&+&-&-&-&-&+&-&-\\
+&+&+&-&-&+&-&-&+&-&-&+&-&-&-\\
+&-&-&+&+&-&-&-&+&-&-&+&+&+&+\\
+&+&-&+&-&-&+&+&+&-&+&-&-&-&-\\
+&-&+&-&+&-&+&-&+&+&+&-&-&-&-\\
+&+&+&+&+&+&-&+&+&+&+&-&-&+&-\\
+&-&-&+&+&+&+&+&-&+&-&+&-&-&- \\
+&+&+&-&-&-&+&+&+&+&-&+&+&+&+\\
+&-&+&-&-&-&-&+&-&-&+&+&-&+&+ \\
+&+&-&-&-&-&-&-&-&+&+&+&+&+&-\\
+&+&-&-&+&+&+&-&-&-&-&-&-&+&+\\
+&-&+&+&-&+&+&-&-&-&-&-&+&+&- \\
+&-&-&-&-&+&-&+&+&+&+&-&+&-&+\\
+&+&+&+&+&+&+&-&-&-&+&+&+&-&+

\end{bmatrix}
$
\caption{ Normalized maximal determinant matrix in $\mathcal{M}_{15}\lbrace -1, 1\rbrace $ obtained from $M^*_{15}$ }
\label{optimal15+}
\end{figure}

\begin{figure}
\footnotesize
\setlength{\arraycolsep}{2.5pt} 
\medmuskip = 1mu 
$
M^{*-}_{15}=\begin{bmatrix}
- & - & -& - & + & + & +& + & + & - &+& + & + & + & -\\
-&-&-&+&-&+&+&-&+&+&+&+&-&+&+\\
-&-&-&+&+&-&+&+&-&+&+&-&+&+&+\\
-&+&+&-&-&+&+&+&-&+&+&-&-&-&-\\
-&-&+&-&+&+&-&-&-&+&-&+&+&+&+\\
-&+&-&+&-&+&-&+&-&-&-&+&+&+&+\\
-&-&-&-&-&-&+&-&-&-&-&+&+&-&+\\
-&+&+&-&-&-&-&-&+&-&+&-&+&+&+ \\
-&-&-&+&+&+&-&-&-&-&+&-&-&-&-\\
-&+&-&+&+&+&+&-&+&+&-&-&+&-&- \\
-&-&+&+&+&+&+&+&+& -&-&-&-&-&+\\
-&-&+&+&-&-&-&+&+&+&+&+&+&-&-\\
-&+&-&-&+&-&-&+&+&+&+&+&-&-&+ \\
-&+&+&+&+&-&+&-&-&-&-&+&-&+&-\\
-&-&-&-&-&-&-&+&+&+&-&-&-&+&-

\end{bmatrix}
$
\caption{ The opposite matrix of $M^{*+}_{15}$ }
\label{optimal15-}
\end{figure}

\begin{figure}
\footnotesize
\setlength{\arraycolsep}{2.5pt} 
\medmuskip = 1mu 
$
H_{16}=\begin{bmatrix}
+&+&+&+&+&+&+&+&+&+&+&+&+&+&+&+\\
+&-&+&-&+&-&+&-&+&-&+&-&+&-&+&- \\
+&+&-&-&+&+&-&-&+&+&-&-&+&+&-&-\\
+&-&-&+&+&-&-&+&+&-&-&+&+&-&-&+\\
+&+&+&+&-&-&-&-&+&+&+&+&-&-&-&-\\
+&-&+&-&-&+&-&+&+&-&+&-&-&+&-&+\\
+&+&-&-&-&-&+&+&+&+&-&-&-&-&+&+\\
+&-&-&+&-&+&+&-&+&-&-&+&-&+&+&-\\
+&+&+&+&+&+&+&+&-&-&-&-&-&-&-&-\\
+&-&+&-&+&-&+&-&-&+&-&+&-&+&-&+ \\
+&+&-&-&+&+&-&-&-&-&+&+&-&-&+&+\\
+&-&-&+&+&-&-&+&-&+&+&-&-&+&+&-\\
+&+&+&+&-&-&-&-&-&-&-&-&+&+&+&+ \\
+&-&+&-&-&+&-&+&-&+&-&+&+&-&+&- \\
+&+&-&-&-&-&+&+&-&-&+&+&+&+&-&- \\
+&-&-&+&-&+&+&-&-&+&+&-&+&-&-&+
\end{bmatrix}
$
\caption{Hadamard matrix of order 16}
\label{hadamard1}
\end{figure}

 \begin{figure}
  \footnotesize
\setlength{\arraycolsep}{2pt} 
\medmuskip = 0.5mu 

  \begin{align*}
  \begin{matrix}
+ & + & +& + & - & - & -& - & - & + &-& - & - & - &+\\
+&+&+&-&+&-&-&+&-&-&-&-&+&-&-\\
+&+&+&-&-&+&-&-&+&-&-&+&-&-&-\\
+&-&-&+&+&-&-&-&+&-&-&+&+&+&+\\
+&+&-&+&-&-&+&+&+&-&+&-&-&-&-\\
+&-&+&-&+&-&+&-&+&+&+&-&-&-&-\\
+&+&+&+&+&+&-&+&+&+&+&-&-&+&-\\
+&-&-&+&+&+&+&+&-&+&-&+&-&-&- \\
+&+&+&-&-&-&+&+&+&+&-&+&+&+&+\\
+&-&+&-&-&-&-&+&-&-&+&+&-&+&+ \\
+&+&-&-&-&-&-&-&-&+&+&+&+&+&-\\
+&+&-&-&+&+&+&-&-&-&-&-&-&+&+\\
+&-&+&+&-&+&+&-&-&-&-&-&+&+&- \\
+&-&-&-&-&+&-&+&+&+&+&-&+&-&+\\
+&+&+&+&+&+&+&-&-&-&+&+&+&-&+
\end{matrix} &
\begin{matrix}
+ & + & +& + & - & - & -& - & - & + &-& - & - & - &+\\
+&+&+&-&+&-&-&+&-&-&-&-&+&-&-\\
+&+&+&-&-&+&-&-&+&-&-&+&-&-&-\\
+&-&-&+&+&-&-&-&+&-&-&+&+&+&+\\
+&+&-&+&-&-&+&+&+&-&+&-&-&-&-\\
+&-&+&-&+&-&+&-&+&+&+&-&-&-&-\\
+&+&+&+&+&+&-&+&+&+&+&-&-&+&-\\
+&-&-&+&+&+&+&+&-&+&-&+&-&-&- \\
+&+&+&-&-&-&+&+&+&+&-&+&+&+&+\\
+&-&+&-&-&-&-&+&-&-&+&+&-&+&+ \\
+&+&-&-&-&-&-&-&-&+&+&+&+&+&-\\
+&+&-&-&+&+&+&-&-&-&-&-&-&+&+\\
+&-&+&+&-&+&+&-&-&-&-&-&+&+&- \\
+&-&-&-&-&+&-&+&+&+&+&-&+&-&+\\
+&+&+&+&+&+&+&-&-&-&+&+&+&-&+
\end{matrix}\\
\begin{matrix}
- & - & -& - & + & + & +& + & + & - &+& + & + & + & -\\
-&-&-&+&-&+&+&-&+&+&+&+&-&+&+\\
-&-&-&+&+&-&+&+&-&+&+&-&+&+&+\\
-&+&+&-&-&+&+&+&-&+&+&-&-&-&-\\
-&-&+&-&+&+&-&-&-&+&-&+&+&+&+\\
-&+&-&+&-&+&-&+&-&-&-&+&+&+&+\\
-&-&-&-&-&-&+&-&-&-&-&+&+&-&+\\
-&+&+&-&-&-&-&-&+&-&+&-&+&+&+ \\
-&-&-&+&+&+&-&-&-&-&+&-&-&-&-\\
-&+&-&+&+&+&+&-&+&+&-&-&+&-&- \\
-&-&+&+&+&+&+&+&+& -&-&-&-&-&+\\
-&-&+&+&-&-&-&+&+&+&+&+&+&-&-\\
-&+&-&-&+&-&-&+&+&+&+&+&-&-&+ \\
-&+&+&+&+&-&+&-&-&-&-&+&-&+&-\\
-&-&-&-&-&-&-&+&+&+&-&-&-&+&-
\end{matrix}&
\begin{matrix}
+ & + & +& + & - & - & -& - & - & + &-& - & - & - &+\\
+&+&+&-&+&-&-&+&-&-&-&-&+&-&-\\
+&+&+&-&-&+&-&-&+&-&-&+&-&-&-\\
+&-&-&+&+&-&-&-&+&-&-&+&+&+&+\\
+&+&-&+&-&-&+&+&+&-&+&-&-&-&-\\
+&-&+&-&+&-&+&-&+&+&+&-&-&-&-\\
+&+&+&+&+&+&-&+&+&+&+&-&-&+&-\\
+&-&-&+&+&+&+&+&-&+&-&+&-&-&- \\
+&+&+&-&-&-&+&+&+&+&-&+&+&+&+\\
+&-&+&-&-&-&-&+&-&-&+&+&-&+&+ \\
+&+&-&-&-&-&-&-&-&+&+&+&+&+&-\\
+&+&-&-&+&+&+&-&-&-&-&-&-&+&+\\
+&-&+&+&-&+&+&-&-&-&-&-&+&+&- \\
+&-&-&-&-&+&-&+&+&+&+&-&+&-&+\\
+&+&+&+&+&+&+&-&-&-&+&+&+&-&+
\end{matrix}  
 \end{align*}
 \caption{
 Saturated d-optimal design matrix for  
 \newline
 $F_1;  F_2 ;  F_3 ; F_4 ; F_5 ;  F_6 ; F_7 ; F_8 ;  F_9 ;F_{10} ; F_{11} ;F_{12} ; F_{13} ; F_{14} ; F_{15}$;
 \newline
  $F_0 ; F_{1,2} ;   F_{1,3} ; F_{1,4}; F_{1,5} ;  F_{1,6};  F_{1,7} ; F_{1,8} ;F_{1,9} ; F_{1,10} ; F_{1,11} ;
   F_{1,12} ; F_{1,13} ; F_{1,14} ; \text{and} F_{1,15}   $  }
   \label{optimal15k1}
  \end{figure}

 \newpage
 \begin{figure}
  \footnotesize
\setlength{\arraycolsep}{2pt} 
\medmuskip = 0.5mu 

  \begin{align*}
  \begin{matrix}
+&+&+&+&+&+&+&+&+&+&+&+&+&+&+\\
+&-&+&-&+&-&+&-&+&-&+&-&+&-&+ \\
+&+&-&-&+&+&-&-&+&+&-&-&+&+&-\\
+&-&-&+&+&-&-&+&+&-&-&+&+&-&-\\
+&+&+&+&-&-&-&-&+&+&+&+&-&-&-\\
+&-&+&-&-&+&-&+&+&-&+&-&-&+&-\\
+&+&-&-&-&-&+&+&+&+&-&-&-&-&+\\
+&-&-&+&-&+&+&-&+&-&-&+&-&+&+\\
+&+&+&+&+&+&+&+&-&-&-&-&-&-&-\\
+&-&+&-&+&-&+&-&-&+&-&+&-&+&- \\
+&+&-&-&+&+&-&-&-&-&+&+&-&-&+\\
+&-&-&+&+&-&-&+&-&+&+&-&-&+&+\\
+&+&+&+&-&-&-&-&-&-&-&-&+&+&+ \\
+&-&+&-&-&+&-&+&-&+&-&+&+&-&+ \\
+&+&-&-&-&-&+&+&-&-&+&+&+&+&- 
\end{matrix}
&\begin{matrix}
+&+&+&+&+&+&+&+&+&+&+&+&+&+&+&+\\
+&-&+&-&+&-&+&-&+&-&+&-&+&-&+&- \\
+&+&-&-&+&+&-&-&+&+&-&-&+&+&-&-\\
+&-&-&+&+&-&-&+&+&-&-&+&+&-&-&+\\
+&+&+&+&-&-&-&-&+&+&+&+&-&-&-&-\\
+&-&+&-&-&+&-&+&+&-&+&-&-&+&-&+\\
+&+&-&-&-&-&+&+&+&+&-&-&-&-&+&+\\
+&-&-&+&-&+&+&-&+&-&-&+&-&+&+&-\\
+&+&+&+&+&+&+&+&-&-&-&-&-&-&-&-\\
+&-&+&-&+&-&+&-&-&+&-&+&-&+&-&+ \\
+&+&-&-&+&+&-&-&-&-&+&+&-&-&+&+\\
+&-&-&+&+&-&-&+&-&+&+&-&-&+&+&-\\
+&+&+&+&-&-&-&-&-&-&-&-&+&+&+&+ \\
+&-&+&-&-&+&-&+&-&+&-&+&+&-&+&- \\
+&+&-&-&-&-&+&+&-&-&+&+&+&+&-&- 
\end{matrix}\\
\begin{matrix}
- & - & -& - & + & + & +& + & + & - &+& + & + & + & - \\
-&-&-&+&-&+&+&-&+&+&+&+&-&+&+ \\
-&-&-&+&+&-&+&+&-&+&+&-&+&+&+ \\
-&+&+&-&-&+&+&+&-&+&+&-&-&-&- \\
-&-&+&-&+&+&-&-&-&+&-&+&+&+&+ \\
-&+&-&+&-&+&-&+&-&-&-&+&+&+&+ \\
-&-&-&-&-&-&+&-&-&-&-&+&+&-&+ \\
-&+&+&-&-&-&-&-&+&-&+&-&+&+&+ \\
-&-&-&+&+&+&-&-&-&-&+&-&-&-&- \\
-&+&-&+&+&+&+&-&+&+&-&-&+&-&-  \\
-&-&+&+&+&+&+&+&+& -&-&-&-&-&+\\
-&-&+&+&-&-&-&+&+&+&+&+&+&-&-\\
-&+&-&-&+&-&-&+&+&+&+&+&-&-&+  \\
-&+&+&+&+&-&+&-&-&-&-&+&-&+&-\\
-&-&-&-&-&-&-&+&+&+&-&-&-&+&- 
\end{matrix} &
  \begin{matrix}
+ & + & +& + & - & - & -& - & - & + &-& - & - & - &+ &+ \\
+&+&+&-&+&-&-&+&-&-&-&-&+&-&- &-\\
+&+&+&-&-&+&-&-&+&-&-&+&-&-&-&-\\
+&-&-&+&+&-&-&-&+&-&-&+&+&+&+&+\\
+&+&-&+&-&-&+&+&+&-&+&-&-&-&-&-\\
+&-&+&-&+&-&+&-&+&+&+&-&-&-&-&+\\
+&+&+&+&+&+&-&+&+&+&+&-&-&+&-&+\\
+&-&-&+&+&+&+&+&-&+&-&+&-&-&-&- \\
+&+&+&-&-&-&+&+&+&+&-&+&+&+&+&-\\
+&-&+&-&-&-&-&+&-&-&+&+&-&+&+ &+\\
+&+&-&-&-&-&-&-&-&+&+&+&+&+&-&+\\
+&+&-&-&+&+&+&-&-&-&-&-&-&+&+&-\\
+&-&+&+&-&+&+&-&-&-&-&-&+&+&-&+ \\
+&-&-&-&-&+&-&+&+&+&+&-&+&-&+&-\\
+&+&+&+&+&+&+&-&-&-&+&+&+&-&+&-
  \end{matrix}\\
  \begin{matrix}
  +&-&-&+&-&+&+&-&-&+&+&-&+&-&- 
  \end{matrix}
  &
  \begin{matrix}
  +&-&-&+&-&+&+&-&-&+&+&-&+&-&- & +
  \end{matrix}
 \end{align*}
 \caption{
 Saturated d-optimal design matrix for  
 \newline
 $F_1;  F_2 ;  F_3 ; F_4 ; F_5 ;  F_6 ; F_7 ; F_8 ;  F_9 ;F_{10} ; F_{11} ;F_{12} ; F_{13} ; F_{14} ; F_{15}$;
 \newline
  $F_0 ; F_{1,2} ;   F_{1,3} ; F_{1,4}; F_{1,5} ;  F_{1,6};  F_{1,7} ; F_{1,8} ;F_{1,9} ; F_{1,10} ; F_{1,11} ;
   F_{1,12} ; F_{1,13} ; F_{1,14} ; F_{1,15} ;\text{and}  F^e_1$  }
   \label{optimal1516k}
  \end{figure}

 \newpage
 \begin{figure}
  \footnotesize
\setlength{\arraycolsep}{2pt} 
\medmuskip = 0.5mu 

  \begin{align*}
  \begin{matrix}
+&+&+&+&+&+&+&+&+&+&+&+&+&+&+&+\\
+&-&+&-&+&-&+&-&+&-&+&-&+&-&+&- \\
+&+&-&-&+&+&-&-&+&+&-&-&+&+&-&-\\
+&-&-&+&+&-&-&+&+&-&-&+&+&-&-&+\\
+&+&+&+&-&-&-&-&+&+&+&+&-&-&-&-\\
+&-&+&-&-&+&-&+&+&-&+&-&-&+&-&+\\
+&+&-&-&-&-&+&+&+&+&-&-&-&-&+&+\\
+&-&-&+&-&+&+&-&+&-&-&+&-&+&+&-\\
+&+&+&+&+&+&+&+&-&-&-&-&-&-&-&-\\
+&-&+&-&+&-&+&-&-&+&-&+&-&+&-&+ \\
+&+&-&-&+&+&-&-&-&-&+&+&-&-&+&+\\
+&-&-&+&+&-&-&+&-&+&+&-&-&+&+&-\\
+&+&+&+&-&-&-&-&-&-&-&-&+&+&+&+ \\
+&-&+&-&-&+&-&+&-&+&-&+&+&-&+&- \\
+&+&-&-&-&-&+&+&-&-&+&+&+&+&-&- \\
+&-&-&+&-&+&+&-&-&+&+&-&+&-&-&+
\end{matrix} &\begin{matrix}
+&+&+&+&+&+&+&+&+&+&+&+&+&+&+&+\\
+&-&+&-&+&-&+&-&+&-&+&-&+&-&+&- \\
+&+&-&-&+&+&-&-&+&+&-&-&+&+&-&-\\
+&-&-&+&+&-&-&+&+&-&-&+&+&-&-&+\\
+&+&+&+&-&-&-&-&+&+&+&+&-&-&-&-\\
+&-&+&-&-&+&-&+&+&-&+&-&-&+&-&+\\
+&+&-&-&-&-&+&+&+&+&-&-&-&-&+&+\\
+&-&-&+&-&+&+&-&+&-&-&+&-&+&+&-\\
+&+&+&+&+&+&+&+&-&-&-&-&-&-&-&-\\
+&-&+&-&+&-&+&-&-&+&-&+&-&+&-&+ \\
+&+&-&-&+&+&-&-&-&-&+&+&-&-&+&+\\
+&-&-&+&+&-&-&+&-&+&+&-&-&+&+&-\\
+&+&+&+&-&-&-&-&-&-&-&-&+&+&+&+ \\
+&-&+&-&-&+&-&+&-&+&-&+&+&-&+&- \\
+&+&-&-&-&-&+&+&-&-&+&+&+&+&-&- \\
+&-&-&+&-&+&+&-&-&+&+&-&+&-&-&+
\end{matrix}\\
\begin{matrix}
-&-&-&-&-&-&-&-&-&-&-&-&-&-&-&-\\
-&+&-&+&-&+&-&+&-&+&-&+&-&+&-&+ \\
-&-&+&+&-&-&+&+&-&-&+&+&-&-&+&+\\
-&+&+&-&-&+&+&-&-&+&+&-&-&+&+&-\\
-&-&-&-&+&+&+&+&-&-&-&-&+&+&+&+\\
-&+&-&+&+&-&+&-&-&+&-&+&+&-&+&-\\
-&-&+&+&+&+&-&-&-&-&+&+&+&+&-&-\\
-&+&+&-&+&-&-&+&-&+&+&-&+&-&-&+\\
-&-&-&-&-&-&-&-&+&+&+&+&+&+&+&+\\
-&+&-&+&-&+&-&+&+&-&+&-&+&-&+&- \\
-&-&+&+&-&-&+&+&+&+&-&-&+&+&-&-\\
-&+&+&-&-&+&+&-&+&-&-&+&+&-&-&+\\
-&-&-&-&+&+&+&+&+&+&+&+&-&-&-&- \\
-&+&-&+&+&-&+&-&+&-&+&-&-&+&-&+ \\
-&-&+&+&+&+&-&-&+&+&-&-&-&-&+&+ \\
-&+&+&-&+&-&-&+&+&-&-&+&-&+&+&-
\end{matrix} &
\begin{matrix}
+&+&+&+&+&+&+&+&+&+&+&+&+&+&+&+\\
+&-&+&-&+&-&+&-&+&-&+&-&+&-&+&- \\
+&+&-&-&+&+&-&-&+&+&-&-&+&+&-&-\\
+&-&-&+&+&-&-&+&+&-&-&+&+&-&-&+\\
+&+&+&+&-&-&-&-&+&+&+&+&-&-&-&-\\
+&-&+&-&-&+&-&+&+&-&+&-&-&+&-&+\\
+&+&-&-&-&-&+&+&+&+&-&-&-&-&+&+\\
+&-&-&+&-&+&+&-&+&-&-&+&-&+&+&-\\
+&+&+&+&+&+&+&+&-&-&-&-&-&-&-&-\\
+&-&+&-&+&-&+&-&-&+&-&+&-&+&-&+ \\
+&+&-&-&+&+&-&-&-&-&+&+&-&-&+&+\\
+&-&-&+&+&-&-&+&-&+&+&-&-&+&+&-\\
+&+&+&+&-&-&-&-&-&-&-&-&+&+&+&+ \\
+&-&+&-&-&+&-&+&-&+&-&+&+&-&+&- \\
+&+&-&-&-&-&+&+&-&-&+&+&+&+&-&- \\
+&-&-&+&-&+&+&-&-&+&+&-&+&-&-&+
\end{matrix}
  \end{align*}
 
 \caption{
 Saturated d-optimal design matrix for  
 \newline
 $F_1;  F_2 ;  F_3 ; F_4 ; F_5 ;  F_6 ; F_7 ; F_8 ;  F_9 ;F_{10} ; F_{11} ;F_{12} ; F_{13} ; F_{14} ; F_{15} ; F_{16}$;
 \newline
  $F_0 ; F_{1,2} ;   F_{1,3} ; F_{1,4}; F_{1,5} ;  F_{1,6};  F_{1,7} ; F_{1,8} ;F_{1,9} ; F_{1,10} ; F_{1,11} ;
   F_{1,12} ; F_{1,13} ; F_{1,14} ; F_{1,15} ;\text{and}  F_{1, 16}$  }
   \label{optimal16}
  \end{figure}

\end{document}